\documentclass[12pt,a4paper]{amsart}

\usepackage[margin=2.5cm]{geometry}
\usepackage[dvips]{graphicx}
\usepackage{amssymb}
\usepackage{amsmath}
\usepackage{amscd}
\usepackage[all]{xy}

\author{Sergiy Maksymenko}
\title[Functions with isolated singularities, II]
{Functions with isolated singularities \\ on surfaces, II}
\address{Topology dept., Institute of Mathematics of NAS of Ukraine, Te\-re\-shchen\-kivska st. 3, Kyiv, 01601 Ukraine}
\email{maks@imath.kiev.ua}
\urladdr{http://www.imath.kiev.ua/~maks}

\keywords{}
\subjclass[2000]{57S05, 57R45, 37C05}
\thanks{This research is partially supported by grants of Ministry of Science and Education of Ukraine No~M/150-2009, and by grant of The State Fund of Fundamental Researches of Ukraine and Russian Foundation for Basic Research, No.~$\Phi$40.1/009.}

\makeatletter
\newcommand\testshape{family=\f@family; series=\f@series; shape=\f@shape.}
\def\myemphInternal#1{\if n\f@shape%
\begingroup\itshape #1\endgroup\/%
\else\begingroup\bfseries #1\endgroup%
\fi}
\def\myemph{\futurelet\testchar\MaybeOptArgmyemph}
\def\MaybeOptArgmyemph{\ifx[\testchar \let\next\OptArgmyemph
                 \else \let\next\NoOptArgmyemph \fi \next}
\def\OptArgmyemph[#1]#2{\index{#1}\myemphInternal{#2}}
\def\NoOptArgmyemph#1{\myemphInternal{#1}}
\makeatother

\newtheorem{theorem}[subsection]{Theorem}
\newtheorem{lemma}[subsection]{Lemma}

\newtheorem{corollary}[subsection]{Corollary}

\newtheorem{remark}[subsection]{Remark}

\newtheorem{definition}[subsection]{Definition}

\newenvironment{axiom}[1]
{
\medskip\par\noindent
{\bf Axiom #1.}\begin{it}
}
{
\end{it}
\par
\medskip
 }

\makeatletter
\@addtoreset{equation}{section}
\@addtoreset{figure}{section}
\@addtoreset{table}{section}
\makeatother

\newcommand\CCC{{\mathbb C}}

\newcommand\RRR{{\mathbb R}}
\newcommand\ZZZ{{\mathbb Z}}

\newcommand\FF{{\mathcal F}}

\newcommand\id{\mathrm{id}}

\newcommand\Int{\mathrm{Int}}

\newcommand\supp{\mathrm{supp\,}}

\newcommand\Orbit{\mathcal{O}}
\newcommand\Stab{\mathcal{S}}
\newcommand\Diff{\mathcal{D}}

\newcommand\Cont[1]{\mathcal{C}^{#1}}
\newcommand\Cr[3]{\Cont{#1}(#2,#3)}
\newcommand\Ci[2]{\Cr{\infty}{#1}{#2}}

\newcommand\Cinf{\mathcal{C}^{\infty}}

\newcommand\Circle{S^1}
\newcommand\Mman{M}
\newcommand\Nman{N}
\newcommand\Pman{P}
\newcommand\Sman{S}
\newcommand\Uman{U}

\newcommand\Wman{W}
\newcommand\Xman{X}
\newcommand\Yman{Y}

\newcommand\tYman{\widetilde{\Yman}}
\newcommand\tXman{\widetilde{\Xman}}
\newcommand\tNman{\widetilde{\Nman}}

\newcommand\tMman{\widetilde{\Mman}}
\newcommand\FldA{F}
\newcommand\FlowA{\mathbf{\FldA}}
\newcommand\func{f}
\newcommand\afunc{\alpha}
\newcommand\bfunc{\beta}

\newcommand\gfunc{g}
\newcommand\tfunc{\widetilde{\func}}

\newcommand\singf{\Sigma_{\func}}
\newcommand\partitf{\Delta_{\func}}

\newcommand\DiffM{\Diff(\Mman)}
\newcommand\DiffMX{\Diff(\Mman,\Xman)}
\newcommand\DiffId{\Diff_{\id}}
\newcommand\DiffIdMX{\DiffId(\Mman,\Xman)}
\newcommand\DiffIdM{\DiffId(\Mman)}
\newcommand\tDiff{\widetilde{\Diff}}

\newcommand\tDifftM{\widetilde{\Diff}(\tMman)}
\newcommand\tDiffIdtM{\widetilde{\Diff}_{\id}(\tMman)}

\newcommand\tDifftMtX{\tDiff(\tMman,\tXman)}

\newcommand\tStab{\widetilde{\Stab}}
\newcommand\StabId{\Stab_{\id}}
\newcommand\tStabId{\tStab_{\id}}

\newcommand\StabIdf{\Stab_{\id}(\func)}

\newcommand\StabfX{\Stab(\func,\Xman)}
\newcommand\StabIdfX{\StabId(\func,\Xman)}

\newcommand\tStabtf{\tStab(\tfunc)}
\newcommand\tStabIdtf{\tStabId(\tfunc)}
\newcommand\tStabIdtftX{\tStabId(\tfunc,\tXman)}

\newcommand\Orbf{\Orbit(\func)}
\newcommand\Orbff{\Orbit_{\func}(\func)}
\newcommand\OrbfX{\Orbit(\func,\Xman)}
\newcommand\OrbffX{\Orbit_{\func}(\func,\Xman)}

\newcommand\ShA{\varphi}

\newcommand\DoubleCover{\beta}
\newcommand\Invol{\xi}

\newcommand\InitHom{\psi}
\newcommand\InitLift{\kappa}
\newcommand\XInitLift{\lambda}
\newcommand\ReqLift{\eta}
\newcommand\ShFInitLift{\delta}
\newcommand\DefShFInitLift{\Delta}

\newcommand\DefInitLift{K}

\newcommand\MobiusBand{M\!\text{\"o}}
\newcommand\prjplane{\RRR\mathrm{P}^2}
\newcommand\Kleinb{\mathbb{K}}

\newcommand\dif{h}
\newcommand\tdif{\tilde{\dif}}

\newcommand\AxBd{{\text{\rm(B1)}}}
\newcommand\AxCrPt{{\text{\rm(L1)}}}
\newcommand\AxVF{{\text{\rm(B2)}}}
\newcommand\AxFibr{{\text{\rm(B3)}}}
\newcommand\AxRestr{{\text{\rm(B4)}}}

\begin{document}
\begin{abstract}
Let $M$ be a smooth connected compact surface, $P$ be either the real line
$\mathbb{R}$ or the circle $S^1$.
For a subset $X\subset M$ denote by $\mathcal{D}(M,X)$ the group of diffeomorphisms of $M$ fixed on $X$.
In this note we consider a special class $\mathcal{F}$ of smooth maps $f:M\to P$ with isolated singularities which includes all Morse maps.
For each map $f\in \mathcal{F}$ we consider certain submanifolds $X\subset M$ that are ``adopted'' with $f$ in a natural sense, and study the right action of the group $\mathcal{D}(M,X)$ on $C^{\infty}(M,P)$.
The main result describes the homotopy types of the connected components of the stabilizers $\mathcal{S}(f)$ and orbits $\mathcal{O}(f)$ for all maps $f\in \mathcal{F}$.
It extends previous author results on this topic.
\end{abstract}

\maketitle

\section{Introduction}
Let $\Mman$ be a smooth compact connected surface and $\Pman$ be either the real line $\RRR$ or the circle $\Circle$.
In this paper we study the subspace $\FF\subset\Ci{\Mman}{\Pman}$ consisting of maps $\func:\Mman\to\Pman$ satisfying the following two axioms:

\begin{axiom}{\AxBd}
The set $\singf$ of critical points of $\func$ is finite and is contained in the interior $\Int{\Mman}$, and $\func$ takes a constant value at each boundary component of $\Mman$.
\end{axiom}
\begin{axiom}{\AxCrPt}
For every critical point $z$ of $\func$ there exists a local presentation $\func_{z}:\RRR^2\to\RRR$ of $\func$ in which $z=(0,0)$ and $\func_{z}$ is a homogeneous polynomial without multiple factors.
\end{axiom}
For instance, due to Morse lemma each non-degenerate critical point of a function $\func:\Mman\to\Pman$ is equivalent to a homogeneous polynomial $\pm x^2\pm y^2$ having no multiple factors.
Hence each Morse function satisfies axiom \AxCrPt.

Recall that every homogeneous polynomial $\gfunc:\RRR^2\to\RRR$ can be expressed as a product $\gfunc = L_1^{p_1}\cdots L^{p_{\alpha}}_{\alpha} Q^{q_1}_1 \cdots Q^{q_{\beta}}_{\beta}$, where $L_i(x,y)=a_i x + b_i y$, and $Q_j(x,y)= c_j x^2 + 2d_j xy + e_j y^2$ is an irreducible over $\RRR$ (definite) quadratic form, $L_i/L_{i'}\not=\mathrm{const}$ for $i\not=i'$, and $Q_j/Q_{j'}\not=\mathrm{const}$ for $j\not=j'$.
Then Axiom \AxCrPt\ requires that $p_i=q_j=1$ for all $i,j$.

Notice that if $p_i\geq2$ for some $i$, then the line $\{L_i=0\}$ consists of critical points of $\func$, whence Axiom \AxCrPt\ implies that all critical points of $\func$ are isolated.
Moreover, the requirement that $q_j=1$ for all $j$ is a certain non-degeneracy assumption.

\begin{definition}
Let $\Xman \subset \Mman$ be a compact submanifold such that its connected components may have distinct dimensions.
Denote by $\Xman^i$, $i=0,1,2$, the union of connected components of $\Xman$ of dimension $i$.
Let also $\func:\Mman\to\Pman$ be a smooth map satisfying axiom \AxBd.
We will say that $\Xman$ is an \myemph{$\func$-adopted} if the following conditions hold true:
\begin{enumerate}
 \item[(0)] $\Xman^0\subset \singf$;
 \item[(1)] $\Xman^1\cap\singf=\varnothing$ and $\func$ takes constant value on each connected component of $\Xman^1$;
 \item[(2)] the restriction $\func|_{\Xman^2}$ satisfies axiom \AxBd\ as well.
\end{enumerate}
\end{definition}

For instance, the following sets and their connected components are \emph{$\func$-adopted}: $\varnothing$, $\partial\Mman$, $\singf$, $\func^{-1}(c)$, where $c\in\Pman$ is a regular value of $\func$, $\func^{-1}(I)$, where $I\subset\Pman$ is a closed interval whose both ends are regular values of $\func$.

\medskip

Let $\Xman\subset\Mman$ be an $\func$-adopted submanifold, and $\DiffMX$ be the group of diffeomorphisms of $\Mman$ fixed on $\Xman$.
Endow $\DiffMX$ and $\Ci{\Mman}{\Pman}$ with $\Cinf$-topologies.
Then $\DiffMX$ continuously acts from the right on $\Ci{\Mman}{\Pman}$ by the formula:
\begin{equation}\label{equ:DiffM_action1}
\func \cdot \dif = \func \circ \dif, \qquad \dif\in\DiffMX, \ \func\in\Ci{\Mman}{\Pman}.
\end{equation}
For $\func\in\Ci{\Mman}{\Pman}$ let $\StabfX=\{\dif\in\DiffMX \mid \func\circ\dif=\func\}$ and $\OrbfX=\{\func\circ\dif\mid \dif\in\DiffMX\}$ be respectively the stabilizer and the orbit of $\func$.
Let also $\DiffIdMX$ and $\StabIdfX$ be the identity path components of $\DiffMX$ and $\StabfX$, and $\OrbffX$ be the path component of $\func$ in $\OrbfX$.

We will omit notation for $\Xman$ whenever it is empty, for instance $\StabIdf = \StabId(\func,\varnothing)$, and so on.

In a series of papers~\cite{Maks:AGAG:2006, Maks:TrMath:2008, Maksymenko:ProcIM:ENG:2010, Maks:MFAT:2010} for the cases $\Xman=\varnothing$ and $\Xman=\singf$ the author calculated the homotopy types of $\StabIdfX$ and $\OrbffX$ for a large class of smooth maps $\func:\Mman\to\Pman$ which includes all maps satisfying axioms \AxBd\ and \AxCrPt.

The aim of this paper is to extend these results to the general case of $\DiffMX$, where $\Xman$ is an $\func$-adopted submanifold, see Section~\ref{sect:main_results}.

\subsection{Notation.}
Throughout the paper $T^2$ will be a $2$-torus $S^1\times S^1$, $\MobiusBand$ a M\"obius band, and $\Kleinb$ a Klein bottle.
For topological spaces $X$ and $Y$ the notation $X\cong Y$ will mean that $X$ and $Y$ are homotopy equivalent.

For a map $\func:\Mman\to\Pman$ and $c\in\Pman$ the set $\func^{-1}(c)$ will be called a \myemph{level set} of $\func$.
Let $\omega$ be connected component of some level set of $\func$.
Then $\omega$ is \myemph{critical} if it contains a critical point of $\func$.
Otherwise $\omega$ will be called \emph{regular}.

We will denote by $\partitf$ the partition on $\Mman$ whose elements are critical points of $\func$ and connected components of the sets $\func^{-1}(c)\setminus\singf$ for all $c\in\Pman$, see~\cite{Maks:AGAG:2006, Maksymenko:ProcIM:ENG:2010}.

For a vector field $\FldA$ on $\Mman$ and a smooth function $\afunc:\Mman\to\RRR$ we will denote by $\FldA(\afunc)$ the Lie derivative of $\afunc$ along $\FldA$.

\subsection{Acknowledgements}
The author would like to thank Professor Tatsuhiko Yagasaki for useful discussions of the homotopy types of $\DiffIdMX$ for compact surfaces.

\section{Main results}\label{sect:main_results}
In this section we will assume that $\func:\Mman\to\Pman$ is a smooth map satisfying axioms \AxBd\ and \AxCrPt, and $\Xman\subset\Mman$ is an $\func$-adopted submanifold.
The principal results of this paper are Theorems~\ref{th:StabIdf}, \ref{th:DiffMX_to_OrbfX_fibr}, \ref{th:OrbffX}, and~\ref{th:compute_pi1Of}.
They are new only for the case when {\em $\Xman$ is infinite}.

\medskip

\noindent
{\bf Homotopy type of $\StabIdfX$.}
By~\cite[Th.~1.9]{Maks:AGAG:2006} and~\cite[Th.~3]{Maksymenko:ProcIM:ENG:2010} $\StabIdf$ is contractible except for the functions of types (A)-(D) of~\cite[Th.~1.9]{Maks:AGAG:2006}:
\begin{itemize}
\item[(A)]
$\func_A:S^2\to\Pman$ with only two non-degenerate critical points: one maximum and one minimum, and both points are non-degenerate.

\item[(B)] 
$\func_B:D^2\to\Pman$ with a unique critical point being a non-degenerate local extreme;

\item[(C)] 
$\func_C:S^1\times I\to \Pman$ without critical points.

\item[(D)]
$\func_D:T^2 \to S^1$ without critical points.
\end{itemize}
For these functions $\StabIdf$ is homotopy equivalent to $S^1$.

The following theorem describes the homotopy type of $\StabIdfX$.

\begin{theorem}\label{th:StabIdf}{\rm c.f.~\cite{Maks:AGAG:2006, Maksymenko:ProcIM:ENG:2010}.}
$\StabIdfX\cong S^1$ if and only if the following two conditions hold true
\begin{itemize}
 \item[\rm(i)]
$\StabIdf\cong S^1$, so $\func$ is of one of the types {\rm(A)-(D)} above, and
 \item[\rm(ii)] 
$\Xman\subset\singf$.
\end{itemize}
In all other cases $\StabIdfX$ is contractible.
\end{theorem}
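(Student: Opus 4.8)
The plan is to strip away the only genuinely new situation, namely $\Xman$ infinite, by an elementary observation, and then to treat it by cutting $\Mman$ along the one- and two-dimensional parts of $\Xman$ and reducing to a single lemma about a surface with a pinned boundary circle.

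The starting point is the identity $\StabIdf=\StabId(\func,\singf)$. Indeed, if $\dif\in\StabIdf$ and $\dif_t$ is a path in $\Stabf$ from $\id$ to $\dif$, then each $\dif_t$ preserves $\func$ and hence permutes the finite set $\singf$; since $t\mapsto\dif_t|_{\singf}$ is continuous, takes values in the finite (discrete) set of permutations of $\singf$, and equals the identity at $t=0$, every $\dif_t$, and in particular $\dif$, fixes $\singf$ pointwise. Consequently, whenever $\Xman\subset\singf$ one has $\StabId(\func,\singf)\subset\StabIdfX\subset\StabIdf$, so that $\StabIdfX=\StabIdf$. Because $\Xman$ is $\func$-adopted, the condition $\Xman\subset\singf$ is equivalent to $\Xman$ being finite, so for finite $\Xman$ the theorem already follows from \cite[Th.~1.9]{Maks:AGAG:2006} and \cite[Th.~3]{Maksymenko:ProcIM:ENG:2010}: if (i) holds then $\StabIdfX=\StabIdf\cong\Circle$, and otherwise $\StabIdf$, hence $\StabIdfX$, is contractible. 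What remains --- the new content --- is the implication \emph{if $\Xman^1\cup\Xman^2\neq\varnothing$, then $\StabIdfX$ is contractible.}

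For this I would cut $\Mman$ along $\Xman^1$ and delete the interior of $\Xman^2$, obtaining compact surfaces $\Nman_1,\dots,\Nman_k$; on each, $\func$ restricts to a map $\gfunc_j$ still satisfying \AxBd\ and \AxCrPt, and every boundary circle of $\Nman_j$ lies over a component of $\partial\Mman$, of $\Xman^1$, or of $\partial\Xman^2$, on all of which $\func$ is constant. A collar argument --- legitimate because $\func$ is regular along $\Xman^1$ and $\func|_{\Xman^2}$ obeys the axioms, and performed near the points of $\Xman^0$ exactly as in \cite{Maks:AGAG:2006, Maksymenko:ProcIM:ENG:2010} --- lets one deformation retract $\StabIdfX$ onto the subgroup of those diffeomorphisms that are the identity on a neighbourhood of $\Xman$. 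Such a diffeomorphism preserves each $\Nman_j$, and restriction identifies this subgroup with the product of the analogous subgroups of the groups $\Stab(\gfunc_j,C_j)$, where $C_j\subset\Nman_j$ consists of the points of $\Xman^0$ lying in $\Nman_j$ together with the boundary circles of $\Nman_j$ coming from $\Xman^1\cup\partial\Xman^2$; each such $C_j$ is $\gfunc_j$-adopted. Undoing the collar retractions on the pieces gives $\StabIdfX\cong\prod_{j=1}^{k}\StabId(\gfunc_j,C_j)$. Finally, as $\Mman$ is connected and the cut was made along a non-empty $1$-manifold, every $C_j$ contains at least one boundary circle of $\Nman_j$ (the degenerate possibilities $\Xman^2=\Mman$ and $\Nman_j$ closed being respectively trivial and impossible).

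It thus suffices to prove the following lemma, and this is the step I expect to be the main obstacle: \emph{if $\Nman$ is a compact connected surface with $\partial\Nman\neq\varnothing$, $\gfunc:\Nman\to\Pman$ satisfies \AxBd\ and \AxCrPt, and $C\subset\Nman$ is a $\gfunc$-adopted set containing a boundary circle of $\Nman$, then $\StabId(\gfunc,C)$ is contractible.} I would re-run the inductive description of $\StabId(\gfunc)$ and of its relative versions from \cite{Maks:AGAG:2006, Maksymenko:ProcIM:ENG:2010}, which exhibits this group as an iterated extension of building blocks indexed by the elements of $\partitf$ (equivalently, by the vertices and edges of the Kronrod--Reeb graph of $\gfunc$), all of them contractible except possibly a single circle factor; that factor occurs only when $(\Nman,\gfunc)$ is one of the global models (A)--(D), and since $\partial\Nman\neq\varnothing$ this leaves only the disc (B) and the annulus (C), in each of which the generator of $\pi_1$ is a rotation that moves every boundary circle. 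Carrying the extra constraint ``equal to the identity on the distinguished boundary circle of $\Nman$'' through the same induction then excludes the circle factor, while the remaining fixed data of $C$ only shrinks the group further; hence $\StabId(\gfunc,C)$ is a product of contractible building blocks, and is contractible. The delicate point is to verify that this constraint is compatible with all the gluings of the inductive scheme and genuinely eliminates the circle at the one stage where it would otherwise arise --- which requires revisiting, in the relative setting, the local model at a homogeneous critical point, the behaviour along a regular level component, and the crossing of a critical level component. Combined with the preceding reduction, this shows that $\StabIdfX$ is a finite product of contractible spaces, which completes the proof.
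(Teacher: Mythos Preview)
Your reduction strategy is sound: the case $\Xman\subset\singf$ follows from the known results via $\StabIdf=\StabId(\func,\singf)$, and decomposing $\StabIdfX$ into a product $\prod_j\StabId(\gfunc_j,C_j)$ after cutting along $\Xman^1\cup\partial\Xman^2$ is valid (the collar deformation you need is essentially Lemma~\ref{lm:deform_near_X} and Corollary~\ref{cor:DiffMX'_DiffMX}). However, the lemma you isolate as the ``main obstacle'' is not a simplification but a restatement of the nontrivial case of the theorem itself: you still must show that pinning a regular circle kills any $S^1$ factor, and --- less obviously --- that even when $\StabId(\gfunc)$ is already contractible the subgroup $\StabId(\gfunc,C)$ remains contractible, which is not automatic since it is merely a subspace, not a priori a retract. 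Your proposed re-run of the inductive scheme from the earlier papers is a plan, not a proof.

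The paper's route is completely different and far more direct. It uses Axiom~\AxVF: the shift map $\ShA$ along the skew-gradient flow parametrizes $\StabIdf$ by the convex set $\Gamma=\{\afunc:\FldA(\afunc)>-1\}$, either as a homeomorphism or as a $\ZZZ$-covering. Lemma~\ref{lm:hom_type_StabIdfX} observes that the convex subset $\Gamma_{\Xman}=\{\afunc\in\Gamma:\afunc|_{\Xman^1\cup\Xman^2}=0\}$ satisfies $\ShA(\Gamma_{\Xman})=\StabIdfX$, and that when $\Xman^1\cup\Xman^2\neq\varnothing$ the restriction $\ShA|_{\Gamma_{\Xman}}$ is injective even in the $\ZZZ$-covering case, because two $\ShA$-preimages of the same diffeomorphism differ by an integer multiple of the strictly positive period function $\theta$, which cannot vanish on $\Xman^1\cup\Xman^2$. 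Thus $\StabIdfX$ is homeomorphic to a convex set, hence contractible. This argument works globally on $\Mman$ with no cutting and incidentally proves your lemma as the special case where $\Xman$ contains a boundary circle; your decomposition adds bookkeeping without reducing the analytic core.
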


The proof of this theorem will be given in Section~\ref{sect:proof_th_1}.

\medskip

\noindent
{\bf Homotopy type of $\OrbffX$.}
We will now show that for description of the homotopy type of orbits $\OrbffX$ one can always assume that $\partial\Mman\subset\Xman$.
First we need the following technical result.

\begin{theorem}\label{th:DiffMX_to_OrbfX_fibr}
The map $p:\DiffMX\to\OrbfX$ defined by $p(\dif) = \func\circ\dif$ for $\dif\in\DiffMX$ is a Serre fibration.
\end{theorem}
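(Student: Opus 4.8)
The plan is to verify the defining lifting property of a Serre fibration: given a commutative square with a cube $I^n$ mapping to $\OrbfX$ and a face $I^n\times\{0\}$ lifting to $\DiffMX$, one must lift the whole homotopy $\Phi:I^n\times I\to\OrbfX$. Unravelling the definition of $p$, a point of $\OrbfX$ is a map $g=\func\circ\dif$ for some $\dif\in\DiffMX$, and lifting the path $\Phi$ through $p$ amounts to finding, continuously in the parameter $s\in I^n$, a path of diffeomorphisms $\dif_{s,t}\in\DiffMX$ starting at the prescribed $\dif_{s,0}$ and satisfying $\func\circ\dif_{s,t}=\Phi(s,t)$. The standard device here (going back to Cerf and used throughout the author's earlier papers, e.g.~\cite{Maks:AGAG:2006}) is to produce a continuous family of ``shift'' diffeomorphisms out of a suitable vector field, so that the correction $\dif_{s,t}\circ\dif_{s,0}^{-1}$ is obtained by integrating a time-dependent vector field that pushes the level sets of $\func$ along with the deformation $\Phi$.

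Concretely, I would first reduce to a local-to-global statement: since $\DiffMX$ and $\OrbfX$ carry $\Cinf$-topologies and $I^n\times I$ is compact, it suffices to construct local sections of $p$ over a neighbourhood of each point of $\OrbfX$, and then patch them using a partition of unity on the parameter cube together with the group structure of $\DiffMX$ (composition of the local corrections). So the heart of the matter is: \emph{given} $g_0=\func\circ\dif_0\in\OrbfX$, construct a continuous map $\sigma$ from a neighbourhood $\mathcal N$ of $g_0$ in $\OrbfX$ to $\DiffMX$ with $p\circ\sigma=\id$ and $\sigma(g_0)=\dif_0$. Equivalently, after composing with $\dif_0^{-1}$, one needs a local section near $\func$ itself. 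Here one writes a nearby $g=\func\circ\dif$ and seeks a diffeomorphism $\dif$ depending continuously and smoothly on $g$; the construction proceeds on the ``strip'' between the level sets, away from the critical points, by integrating the vector field $\FldA$ characterized by $\func(\FldA)=1$ (suitably normalized and cut off near $\singf$), and matched near each critical point using Axiom~\AxCrPt, which gives an explicit homogeneous-polynomial model in which the required local diffeomorphism can be written down. Because $\Xman$ is $\func$-adopted, the vector field $\FldA$ can be chosen tangent to $\Xman^1$ and vanishing on $\Xman^0$ and (by condition~(2)) compatible on $\Xman^2$, so that the resulting shifts fix $\Xman$ pointwise and $\dif\in\DiffMX$ as required.

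The main obstacle is the behaviour near the critical points and near $\Xman$ simultaneously: one must ensure that the locally constructed corrections (the shift along $\FldA$ on the regular part, and the model diffeomorphisms near each $z\in\singf$) glue to a single diffeomorphism that is the identity on all of $\Xman$ and depends continuously on the data in the $\Cinf$-topology. This is exactly the kind of estimate handled in the author's previous work; the new point is only to keep track of the constraint on $\Xman^1$ (a regular level arc or circle, on which $\func$ is constant) and $\Xman^2$ (where $\func|_{\Xman^2}$ again satisfies \AxBd), and this is precisely what the definition of $\func$-adopted was arranged to guarantee. Once the local sections with the $\Xman$-constraint are in place, the patching and the verification of the homotopy lifting property are formal, and the theorem follows.
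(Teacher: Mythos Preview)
Your approach diverges from the paper's and contains a genuine gap. The vector field you postulate satisfies $\FldA(\func)=1$, i.e.\ it is \emph{gradient-like} and therefore transverse to every regular level set of $\func$. But by the definition of an $\func$-adopted submanifold, every component of $\Xman^1$ and of $\partial\Xman^2$ is a regular level circle of $\func$; a vector field with $\FldA(\func)=1$ can never be tangent to such a circle, so the sentence ``$\FldA$ can be chosen tangent to $\Xman^1$'' is self-contradictory. Without this, the shift diffeomorphisms produced by integrating $\FldA$ will move points of $\Xman^1\cup\partial\Xman^2$ off themselves, and you do not get maps in $\DiffMX$. The same obstruction blocks the local-section construction near $\Xman^2$.

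The paper avoids this entirely by a two-step reduction. First, the case $\Xman=\varnothing$ is taken as known (this is the hard analytic input, going back to Sergeraert and reformulated as Axiom~\AxFibr). Given a lifting problem for $p:\DiffMX\to\OrbfX$, one uses \AxFibr\ to produce a lift $\InitLift:\Sman\times I\to\DiffM$ that need not fix $\Xman$. One then shows (Lemma~\ref{lm:X_is_invar}) that each $\InitLift(s,t)$ at least \emph{preserves} every leaf of $\partitf$ contained in $\Xman$. The second step corrects $\InitLift$ to fix $\Xman$: for each component $\Yman$ of $\Xman^1\cup\Xman^2$ one uses the \emph{skew-gradient} field $\FldA$ of Axiom~\AxVF\ (with $\FldA(\func)\equiv 0$, so its flow is tangent to all level sets and in particular to $\Xman^1$ and $\partial\Xman^2$) to write the restriction $\InitLift(s,t)|_{\Yman}$ as a shift $\FlowA(\,\cdot\,,\delta(s,t))$, extend $\delta$ to $\Mman$ with support in a tubular neighbourhood of $\Yman$, and pre-compose with the inverse shift. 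The resulting $\ReqLift(s,t)=\hat\InitLift(s,t)^{-1}\circ\InitLift(s,t)$ is then fixed on $\Xman$ and still satisfies $\func\circ\ReqLift=\InitHom$. The point is that the \emph{correction} uses a field tangent to level sets, not transverse to them; the transverse field is hidden inside the black-box case $\Xman=\varnothing$.
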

For $\Xman=\varnothing$ the orbit $\Orbff$ has a ``finite codimension'' in the space of all smooth functions and the result is proved in~\cite{Sergeraert:ASENS:1972}, see~\cite[Lm.~11]{Maksymenko:ProcIM:ENG:2010} for detailed explanations.
We will deduce the general case of Theorem~\ref{th:DiffMX_to_OrbfX_fibr} from the case $\Xman=\varnothing$, see Section~\ref{sect:proof_th_2}.

\begin{corollary}\label{equ:OffX_OffY}
Let $\Yman$ be a union of some connected components of $\partial\Mman$, so $\Xman\cup\Yman$ is an $\func$-adopted submanifold of $\Mman$.
Then $\Orbit_{\func}(\func,\Xman\cup\Yman) = \OrbffX$.
\end{corollary}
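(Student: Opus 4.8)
The plan is to show the two orbits coincide as subsets of $\Ci{\Mman}{\Pman}$, from which the equality of path components of $\func$ follows trivially, and the homotopy-type consequences are immediate once Theorem~\ref{th:DiffMX_to_OrbfX_fibr} is in hand. The key observation is that enlarging the fixed set from $\Xman$ to $\Xman\cup\Yman$ shrinks the acting group from $\DiffMX$ to $\DiffMXY$, hence a priori only gives an inclusion $\Orbit_{\func}(\func,\Xman\cup\Yman)\subset\OrbffX$. So the whole content is the reverse inclusion: every $\gfunc = \func\circ\dif$ with $\dif\in\DiffMX$ can also be written as $\func\circ\dif'$ with $\dif'$ fixed on all of $\Xman\cup\Yman$.

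First I would reduce to a single boundary circle: it suffices to treat one connected component $\omega\subset\partial\Mman$ at a time and then iterate, since diffeomorphisms supported near distinct boundary components commute. Fix such an $\omega$. By Axiom \AxBd, $\func$ is constant on $\omega$, say $\func|_\omega \equiv c$, and $\omega$ contains no critical points, so a collar neighbourhood $U\cong \omega\times[0,1)$ of $\omega$ carries no singularities and $\func|_U$ has $\omega$ as a regular level component. Given $\dif\in\DiffMX$, the diffeomorphism $\dif$ permutes the boundary components of $\Mman$; since $\func\circ\dif$ and $\func$ agree on $\Xman$, and boundary components are distinguished by $\func$-values and by the local model, $\dif(\omega)$ is again a boundary component with $\func\circ\dif \equiv c$ on $\omega$. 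The point is that $\func\circ\dif$ is already constant $=c$ on $\omega$, so $\dif|_\omega$ is a self-diffeomorphism of $\omega$ preserving the (trivial) level structure.

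Next I would produce an isotopy of $\Mman$, fixed on $\Xman$ and on $\func$, that pushes $\dif|_\omega$ to the identity on a neighbourhood of $\omega$. Since $\omega\cong S^1$ and the level sets of $\func$ near $\omega$ foliate the collar $U$ by circles parallel to $\omega$, one can first isotope $\dif$ (within the stabilizer $\StabfX$, rel $\Xman$) so that it preserves this collar and each of these level circles; this is the standard ``straightening along a regular level'' argument, using that a diffeomorphism of $S^1\times[0,1)$ covering the identity on the base and isotopic to the identity can be deformed to be the identity near $S^1\times 0$ without changing the composition with the projection. The resulting $\dif'\in\DiffMX$ is then fixed pointwise on a neighbourhood of $\omega$, in particular on $\omega$, and satisfies $\func\circ\dif' = \func\circ\dif$. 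Applying this to every component of $\Yman$ in turn yields $\dif''\in\Diff(\Mman,\Xman\cup\Yman)$ with $\func\circ\dif'' = \func\circ\dif = \gfunc$, proving $\OrbfX\subset\Orbit_{\func}(\func,\Xman\cup\Yman)$ as sets; since the reverse inclusion is obvious and $\func$ itself lies in both, the path components of $\func$ coincide.

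The main obstacle is the straightening step: one must be careful that the isotopy used to normalize $\dif$ near $\omega$ stays inside $\StabfX$ (i.e. genuinely preserves $\func$, not merely its level sets up to reparametrization) and is supported away from $\Xman$. This is handled by performing the normalization strictly inside the collar $U$, which by Axiom \AxBd\ and condition (1) of the definition of $\func$-adopted is disjoint from $\singf$ and can be chosen disjoint from $\Xman^0$ and $\Xman^1$; on $U$ the map $\func$ is a submersion onto an interval, so $\func|_U$ is equivalent to a projection $\omega\times[0,\varepsilon)\to[0,\varepsilon)$, and deforming $\dif$ to the identity near $\omega\times 0$ through fibre-preserving diffeomorphisms that moreover preserve the $[0,\varepsilon)$-coordinate keeps us inside $\StabfX$ throughout. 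An alternative, cleaner route avoiding explicit isotopies is to invoke Theorem~\ref{th:DiffMX_to_OrbfX_fibr}: both $p:\DiffMX\to\OrbfX$ and $p:\Diff(\Mman,\Xman\cup\Yman)\to\Orbit_{\func}(\func,\Xman\cup\Yman)$ are Serre fibrations, and a standard argument with the long exact sequences plus the fact that $\Diff(\Mman,\Xman)$ deformation retracts onto $\Diff(\Mman,\Xman\cup\Yman)$ rel nothing that moves $\func$-values (again because collars of $\partial\Mman$ are $\func$-trivial) forces the orbits to agree; but the direct isotopy argument above is the most self-contained.
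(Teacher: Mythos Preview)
Your main approach has a genuine gap: you are trying to prove that the \emph{full} orbits $\OrbfX$ and $\Orbit(\func,\Xman\cup\Yman)$ coincide as sets, but this is false in general. Take $\Mman=S^1\times[0,1]$, $\func(z,t)=t$, $\Xman=\varnothing$, and $\Yman=S^1\times\{0\}$. The flip $\dif(z,t)=(z,1-t)$ lies in $\DiffM$ and produces $\gfunc=\func\circ\dif=1-\func\in\Orbf$; but $\gfunc\notin\Orbit(\func,\Yman)$, since any $\dif'\in\Diff(\Mman,\Yman)$ fixes $S^1\times\{0\}$ and hence $\func\circ\dif'$ takes the value $0$ there, whereas $\gfunc$ takes the value $1$. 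Your assertion that ``$\dif|_\omega$ is a self-diffeomorphism of $\omega$'' fails here, and more generally the claim that boundary components are ``distinguished by $\func$-values and by the local model'' is unjustified: nothing prevents two boundary circles from carrying the same $\func$-value or from being exchanged by a diffeomorphism.

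The statement is only about the \emph{path components} $\OrbffX$ and $\Orbit_{\func}(\func,\Xman\cup\Yman)$, and the paper's proof exploits this in an essential way. Given $\gfunc\in\OrbffX$, one takes a path $\omega_t$ from $\func$ to $\gfunc$ in $\OrbfX$ and uses Theorem~\ref{th:DiffMX_to_OrbfX_fibr} to lift it to a path $\widetilde{\omega}_t$ in $\DiffMX$ starting at $\id_{\Mman}$. The resulting $\widetilde{\omega}_1$ is then \emph{isotopic to the identity} rel $\Xman$, which is exactly what forces it to preserve each component of $\Yman$ with its orientation; only then does your collar-straightening step become legitimate, producing $\dif\in\StabfX$ equal to $\widetilde{\omega}_1$ near $\Yman$, so that $\dif^{-1}\widetilde{\omega}_1\in\Diff(\Mman,\Xman\cup\Yman)$. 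Your ``alternative route'' at the end gestures toward the Serre fibration but does not isolate this point, and the claimed deformation retraction of $\DiffMX$ onto $\Diff(\Mman,\Xman\cup\Yman)$ is itself false for the same reason (the flip above is not isotopic rel $\varnothing$ to anything fixing $\Yman$).
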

\begin{proof}
We can assume that $\Xman\cap\Yman=\varnothing$, otherwise just replace $\Yman$ with $\Yman\setminus\Xman$.

Evidently, $\Orbit_{\func}(\func,\Xman\cup\Yman) \subset \OrbffX$.

Conversely, let $\gfunc\in\OrbffX$, so there exists a path $\omega:I\to\OrbfX$ such that $\omega_0=\func$, and $\omega_1=\gfunc$.
Since $p$ is a Serre fibration, this path lifts to a path $\widetilde{\omega}:I\to\DiffMX$ such that $\widetilde{\omega}_0=\id_{\Mman}$ and $\omega_t = \func\circ \widetilde{\omega}_t$.
In particular, $\gfunc = \omega_1 = \func\circ\widetilde{\omega}_1$.

Since $\widetilde{\omega}_1$ is isotopic to $\id_{\Mman}$ relatively to $\Xman$, we have that it preserves each connected component of $\Yman$.
Then due to \AxBd, it is easy to construct a diffeomorphism $\dif\in\DiffMX$ such that $\dif=\widetilde{\omega}_1$ even on some neighbourhood of $\Yman$ and $\func\circ\dif=\func$.
Hence $\dif^{-1}\circ\widetilde{\omega}_1 \in \Diff(\Mman, \Xman\cup\Yman)$, and 
$\gfunc = \func\circ\widetilde{\omega}_1 = \func\circ\dif^{-1} \widetilde{\omega}_1 \in \Orbit_{\func}(\func,\Xman\cup\Yman)$.
\end{proof}

\begin{lemma}\label{lm:hom_type_DiffIdMX}
{\rm\cite{Smale:ProcAMS:1959, Birman:2:CPAM:1969, EarleEells:DG:1970, EarleSchatz:DG:1970, Gramain:ASENS:1973}.}
The homotopy types of $\DiffIdMX$ are presented in the following table:
\begin{center}
\begin{tabular}{|c|l|c|} \hline
Case & $(\Mman,\Xman)$ & Homotopy type of $\DiffIdMX$ \\  \hline  \hline
$1)$ & $S^2$, $\prjplane$ & $SO(3)$ \\ \hline
$2)$ & $T^2$ & $T^2$ \\ \hline
$3)$ & $(S^2,*)$, $(S^2,**)$, $\Kleinb$ & $S^1$ \\
$4)$ & $(D^2,*)$, $D^2$, $S^1\times I$, $\MobiusBand$  &  \\ \hline
$5)$ & all other cases & point \\ \hline
\end{tabular}
\end{center}
Here $*$ is a point; $(S^2,**)$ means that $\Xman$ consists of two points; and we omit $\Xman$ when it is empty, e.g. $S^2 = (S^2,\varnothing)$.

In particular, $\chi(\Mman)<\#(\Xman)$, e.g., when $\Xman$ is infinite, then $\DiffIdMX$ is contractible.
\qed
\end{lemma}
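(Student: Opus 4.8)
\emph{Proof proposal.}
The base cases $\Xman=\varnothing$ are the classical computations of the homotopy type of the identity component of the diffeomorphism group of a closed surface, or of a compact surface with (part of) its boundary fixed; for these I would invoke Smale~\cite{Smale:ProcAMS:1959} for $S^2$ and $\Disk$, Earle--Eells~\cite{EarleEells:DG:1970}, Earle--Schatz~\cite{EarleSchatz:DG:1970} and Gramain~\cite{Gramain:ASENS:1973} in the remaining closed and bounded cases, and Birman~\cite{Birman:2:CPAM:1969} for the link with mapping class groups. The work is to propagate these to nonempty $\Xman$: to recover the exceptional rows of the table and to prove that $\DiffIdMX$ is contractible in every other case, in particular whenever $\Xman$ is infinite.

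Suppose first $\Xman=\Xman^0=\{x_1,\dots,x_n\}\subset\Int\Mman$ is finite; here I would induct on $n$. For each $k$ the evaluation $\dif\mapsto\dif(x_k)$ is a locally trivial fibration $\Diff(\Mman,\{x_1,\dots,x_{k-1}\})\to\Int\Mman\setminus\{x_1,\dots,x_{k-1}\}$ with fibre $\Diff(\Mman,\{x_1,\dots,x_k\})$, so feeding these fibrations and the known type of $\DiffIdM$ into the homotopy exact sequences computes $\DiffId(\Mman,\{x_1,\dots,x_k\})$ step by step. The only point to check is that, as soon as $\#\Xman>\chi(\Mman)$, the $\ZZZ$ in $\pi_1$ coming from the unique (if present) circle subgroup of ``rotations'' maps isomorphically onto a generator of $\pi_1$ of the punctured surface and therefore dies; inspecting the finitely many surfaces occurring in cases~1)--4) shows this is exactly what happens, so $\DiffIdMX$ becomes and remains contractible once $\#\Xman>\chi(\Mman)$. (When $\chi(\Mman)<0$ the group is already contractible for $\Xman=\varnothing$, and the induction preserves this.)

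Now suppose $\Xman$ is infinite, i.e.\ $\Xman^1\cup\Xman^2\neq\varnothing$. I would pick a closed collar/tubular neighbourhood $\Uman$ of $\Xman^1\cup\Xman^2$ disjoint from $\Xman^0$ and proceed in four moves. (a) The inclusion $\Diff(\Mman,\Uman\cup\Xman^0)\hookrightarrow\Diff(\Mman,\Xman)$ is a homotopy equivalence on identity components, by a collar argument that replaces ``fixed pointwise on a compact submanifold'' with ``fixed on a neighbourhood'': the space of germs along the submanifold of diffeomorphisms fixing it pointwise deformation retracts, on its identity component, onto a point, the normal--jet data forming a bundle over the submanifold whose structure group is contractible after passage to the orientation--preserving part. (b) Restriction is an isomorphism $\Diff(\Mman,\Uman\cup\Xman^0)\cong\Diff(\Nman,(\partial\Uman)\cup\Xman^0)$, where $\Nman=\overline{\Mman\setminus\Uman}$ is a compact, possibly disconnected, surface and $\partial\Uman$ is a \emph{nonempty} union of boundary circles of $\Nman$ (nonempty because $\Mman$ is connected; the degenerate case $\Uman=\Mman$ yields the one--point group). (c) Since the identity component of $\Diff$ of a disjoint union is the product of the identity components of the pieces, it suffices to treat each connected piece of $\Nman$ separately, with its inherited nonempty set of fixed boundary circles and finite set of fixed interior points. (d) For a compact connected surface $\Nman$ with $\partial\Nman\neq\varnothing$, a nonempty union $\Yman\subset\partial\Nman$ of boundary circles and a finite $Z\subset\Int\Nman$, the group $\DiffId(\Nman,\Yman\cup Z)$ is contractible: if $\Yman=\partial\Nman$ this is Gramain's contractibility of the components of $\Diff(\Nman,\partial\Nman)$ (with Smale for $\Nman=\Disk$); if $\partial\Nman\setminus\Yman\neq\varnothing$, restriction to the free boundary circles is a fibration over $\Diff(\partial\Nman\setminus\Yman)$ with fibre $\Diff(\Nman,\partial\Nman)$, and in its homotopy exact sequence the $\ZZZ$'s in $\pi_1$ of the base map isomorphically onto the free abelian subgroup of $\pi_0\Diff(\Nman,\partial\Nman)$ generated by the corresponding independent boundary Dehn twists and thus die; adjoining the finitely many interior points of $Z$ changes nothing, since the punctured surface is aspherical with $\pi_2=0$, so one further evaluation fibration leaves the already trivial homotopy trivial.

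I expect move (a) to be the main obstacle: making rigorous the passage from ``fixed pointwise on $\Xman^1\cup\Xman^2$'' to ``fixed on a neighbourhood of it'', i.e.\ the contractibility, on identity components, of the spaces of germs of boundary--fixing diffeomorphisms. This is precisely where the infinite--$\Xman$ case departs from the finite case of the author's earlier papers and where the technology of diffeomorphism groups of noncompact surfaces is needed; everything downstream is bookkeeping with the homotopy exact sequences of a handful of restriction fibrations.
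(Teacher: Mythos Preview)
The paper gives no proof of this lemma: the statement is closed with \qed\ immediately after the citations and the table. The references cover $\Xman=\varnothing$ (Smale, Earle--Eells, Earle--Schatz, Gramain) and, through Birman's evaluation fibrations, finite $\Xman\subset\Int\Mman$; the extension to positive-dimensional $\Xman$ is simply taken as known. Your sketch therefore goes well beyond what the paper offers, and it is essentially correct: the finite-$\Xman$ induction via evaluation maps is standard, and for infinite $\Xman$ your moves~(b)--(d) are the right reduction once~(a) is granted. You have also correctly isolated~(a) --- upgrading ``fixed pointwise on $\Xman^1\cup\Xman^2$'' to ``fixed on a collar neighbourhood'' --- as the one step not literally contained in the cited papers; it is a jet/linearisation argument along the submanifold that is folklore but does need to be supplied. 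Incidentally, the paper later proves an $\func$-equivariant analogue of your move~(a) at the level of stabilisers (Lemma~\ref{lm:deform_near_X} and Corollary~\ref{cor:DiffMX'_DiffMX}), using the flow of a skew-gradient field for $\func$; that argument, however, is specific to $\func$-preserving diffeomorphisms and to $\func$-adopted $\Xman$, so it does not replace the general statement you are aiming at.
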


\begin{remark}
In the cases {\rm3)} and {\rm4)} the homotopy types of $\DiffIdMX$ are the same, but we separate these cases with respect to the existence of boundary.
So in the case {\rm3)} $\Mman$ is closed while in the case {\rm4)} $\partial\Mman\not=\varnothing$.
\end{remark}
Denote 
\[
 \Stab'(\func,\Xman) := \StabfX \cap \DiffIdMX.
\]

Thus each $\dif\in\Stab'(\func,\Xman)$ preserves $\func$ and is isotopic to $\id_{\Mman}$, though that isotopy is not assumed to be $\func$-preserving.
This group plays an important role for the fundamental group $\pi_1\OrbffX$.
Notice that $\pi_0\Stab'(\func,\Xman)$ can be regarded as the kernel of the homomorphism:
\[ 
i_0: \pi_0\StabfX \to \pi_0\DiffMX.
\]
induced by the inclusion $i:\StabfX\subset\DiffMX$.

\begin{theorem}\label{th:OrbffX}
We have that 
\begin{equation}\label{equ:pinOf_n2}
\pi_n\OrbffX=\pi_n\DiffIdMX, \qquad  n\geq2.
\end{equation}
Thus if $(\Mman,\Xman)=(S^2,\varnothing)$ or $(\prjplane,\varnothing)$, then $\pi_n\OrbffX=\pi_n S^2$, $n\geq3$, and $\pi_2\OrbffX=0$.
Otherwise, $\pi_n\OrbffX=0$, $n\geq2$, i.e. $\OrbffX$ is aspherical.

Moreover, for $\pi_1\OrbffX$ we have the following exact sequence
\begin{equation}\label{equ:pi1Of_gen_case}
 0 \to \frac{\pi_1\DiffIdMX}{\pi_1\StabIdfX} \xrightarrow{~p_1~} \pi_1\OrbffX \xrightarrow{~\partial_1~} 
 \pi_0\Stab'(\func,\Xman) \to 0.
\end{equation}

In particular, in the case {\rm5)} of Lemma~{\rm\ref{lm:hom_type_DiffIdMX}}, when $\DiffIdMX$ is contractible, we have an isomorphism
\begin{equation}\label{equ:pi1Of_case5}
\pi_1\OrbffX \approx \pi_0\Stab'(\func,\Xman).
\end{equation}

In the case {\rm4)} denote $\Yman=\Xman\cup\partial\Mman$, then 
\begin{equation}\label{equ:pi1Of_case4}
\pi_1\OrbffX = \pi_1\Orbit(\func,\Yman) \approx \pi_0\Stab'(\func,\Yman).
\end{equation}
\end{theorem}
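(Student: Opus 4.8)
The plan is to exploit the Serre fibration $p:\DiffMX\to\OrbfX$ of Theorem~\ref{th:DiffMX_to_OrbfX_fibr}, whose fibre over $\func$ is the stabilizer $\StabfX$, and to run its long exact sequence of homotopy groups. Restricting attention to the path components containing $\id_{\Mman}$ and $\func$ respectively, one gets a fibration (in the sense of homotopy groups based at these points) with total space having the homotopy groups of $\DiffIdMX$, base $\OrbffX$, and fibre $\StabfX$ whose relevant homotopy groups are $\pi_n\StabfX=\pi_n\StabIdfX$ for $n\ge 1$. By Theorem~\ref{th:StabIdf}, $\StabIdfX$ is either contractible or homotopy equivalent to $S^1$; in particular $\pi_n\StabIdfX=0$ for all $n\ge 2$ in every case. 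Feeding this into the exact sequence
\[
\cdots \to \pi_n\StabIdfX \to \pi_n\DiffIdMX \xrightarrow{p_n} \pi_n\OrbffX \xrightarrow{\partial_n} \pi_{n-1}\StabIdfX \to \cdots
\]
immediately gives $\pi_n\OrbffX\cong\pi_n\DiffIdMX$ for $n\ge 2$, since both neighbouring $\StabIdfX$-terms vanish. This proves~\eqref{equ:pinOf_n2}, and the asphericity statements then follow from Lemma~\ref{lm:hom_type_DiffIdMX}: only for $(\Mman,\Xman)=(S^2,\varnothing)$ or $(\prjplane,\varnothing)$ is $\DiffIdMX\cong SO(3)$ non-aspherical, with $\pi_2=0$ and $\pi_n=\pi_nS^2$ for $n\ge 3$; in all other cases $\DiffIdMX$ is a point, $T^2$, or $S^1$, hence aspherical.

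For the $\pi_1$ part, I would continue the same exact sequence down to low degrees:
\[
\pi_1\StabIdfX \xrightarrow{i_1} \pi_1\DiffIdMX \xrightarrow{p_1} \pi_1\OrbffX \xrightarrow{\partial_1} \pi_0\StabfX \xrightarrow{i_0} \pi_0\DiffIdMX.
\]
Here the last term is taken relative to the component of $\id_{\Mman}$, so $\pi_0\DiffIdMX$ is trivial as a pointed set and the image of $\partial_1$ is exactly $\ker(i_0\colon \pi_0\StabfX\to\pi_0\DiffMX)$, which by definition is $\pi_0\Stab'(\func,\Xman)$. Thus $\partial_1$ is surjective onto $\pi_0\Stab'(\func,\Xman)$. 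On the other end, $\ker\partial_1=\IM p_1$, and $\ker p_1=\IM i_1$; identifying $\IM i_1$ with the image of $\pi_1\StabIdfX$ in $\pi_1\DiffIdMX$, I would argue that $p_1$ factors through an injection $\pi_1\DiffIdMX/\pi_1\StabIdfX\hookrightarrow\pi_1\OrbffX$. This requires that $i_1$ be injective, or more precisely that the quotient be well-defined; this is where the classification of the exceptional types (A)--(D) in Theorem~\ref{th:StabIdf} does the work. The map $\pi_1\StabIdfX\to\pi_1\DiffIdMX$ need not be injective in general (e.g.\ $\StabIdf\cong S^1$ mapping into a contractible $\DiffIdMX$), so one must interpret $\pi_1\DiffIdMX/\pi_1\StabIdfX$ as the cokernel of $i_1$, equivalently $\pi_1\DiffIdMX/\IM(i_1)$; I expect the paper intends this reading, and then exactness of~\eqref{equ:pi1Of_gen_case} is a formal consequence of the long exact sequence together with the vanishing $\pi_2\OrbffX$-contribution only when needed (it is not needed here since $\pi_1\StabIdfX$ already sits just to the left of $\pi_1\DiffIdMX$). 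Since all the groups involved are abelian when $\DiffIdMX$ is a torus or $S^1$, and trivial in case~5), the short exact sequence~\eqref{equ:pi1Of_gen_case} is meaningful as a sequence of groups; for the $SO(3)$ cases $\pi_1\DiffIdMX=\ZZZ/2$ and one checks the map explicitly.

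The specializations are then read off. In case~5) of Lemma~\ref{lm:hom_type_DiffIdMX} the group $\DiffIdMX$ is contractible, so the left term of~\eqref{equ:pi1Of_gen_case} vanishes and $\partial_1$ becomes an isomorphism $\pi_1\OrbffX\xrightarrow{\approx}\pi_0\Stab'(\func,\Xman)$, giving~\eqref{equ:pi1Of_case5}; this covers in particular every case with $\Xman$ infinite, by the last sentence of Lemma~\ref{lm:hom_type_DiffIdMX}. In case~4), where $\partial\Mman\ne\varnothing$ and $\DiffIdMX\cong S^1$, I would invoke Corollary~\ref{equ:OffX_OffY} with $\Yman=\Xman\cup\partial\Mman$: since $\Orbit_\func(\func,\Yman)=\OrbffX$, these two orbits literally coincide, so $\pi_1\OrbffX=\pi_1\Orbit(\func,\Yman)$; but now $\partial\Mman\subset\Yman$ forces $(\Mman,\Yman)$ out of case~4) into case~5) (adding boundary to $\Xman$ strictly raises $\#(\Yman)$ past $\chi(\Mman)$ whenever we are in case~4)), so~\eqref{equ:pi1Of_case5} applies to $\Yman$ and yields $\pi_1\Orbit(\func,\Yman)\approx\pi_0\Stab'(\func,\Yman)$, which is~\eqref{equ:pi1Of_case4}. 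The main obstacle I anticipate is the bookkeeping in the $\pi_1$ sequence — pinning down exactly how $\pi_1\StabIdfX$ maps into $\pi_1\DiffIdMX$ in each of the types (A)--(D) (in particular verifying it is injective so that the cokernel notation $\pi_1\DiffIdMX/\pi_1\StabIdfX$ is literally a quotient of $\pi_1\DiffIdMX$ by an embedded copy), and making sure the connecting map $\partial_1$ really lands in the subgroup $\pi_0\Stab'$ rather than all of $\pi_0\StabfX$; the rest is a routine unwinding of the long exact sequence combined with the table of Lemma~\ref{lm:hom_type_DiffIdMX}.
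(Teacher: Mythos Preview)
Your overall approach---running the long exact sequence of the Serre fibration $p:\DiffMX\to\OrbfX$ with fibre $\StabfX$---is exactly what the paper does. The derivation of~\eqref{equ:pi1Of_case5} and~\eqref{equ:pi1Of_case4} via Corollary~\ref{equ:OffX_OffY} also matches the paper. However, there is a genuine gap in your argument for~\eqref{equ:pinOf_n2} at $n=2$.

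You write that for $n\geq 2$ ``both neighbouring $\StabIdfX$-terms vanish''. This is fine for $n\geq 3$, but for $n=2$ the term to the right of $\pi_2\OrbffX$ is $\pi_1\StabIdfX$, which equals $\ZZZ$ precisely when $\StabIdfX\cong S^1$. So in the exceptional cases (A)--(D) with $\Xman\subset\singf$ your argument does not yet give $\pi_2\OrbffX\cong\pi_2\DiffIdMX$; one only gets an exact piece
\[
0 \longrightarrow \pi_2\DiffIdMX \xrightarrow{~p_2~} \pi_2\OrbffX \xrightarrow{~\partial_2~} \pi_1\StabIdfX \xrightarrow{~i_1~} \pi_1\DiffIdMX,
\]
and one must show $\ker i_1=0$. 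The paper does not argue this from scratch: it invokes \cite[Th.~1.9]{Maks:AGAG:2006}, where the injectivity of $i_1$ in each of the types (A)--(D) was established. With that input $\partial_2=0$ and $\pi_2\OrbffX=\pi_2\DiffIdMX$ follows.

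Two related corrections to your discussion of $i_1$. First, your proposed counterexample---``$\StabIdf\cong S^1$ mapping into a contractible $\DiffIdMX$''---cannot occur: by Theorem~\ref{th:StabIdf}, $\StabIdfX\cong S^1$ forces $\func$ to be of type (A)--(D) and $\Xman\subset\singf$, and in every such pair $(\Mman,\Xman)$ Lemma~\ref{lm:hom_type_DiffIdMX} places $\DiffIdMX$ in cases 1)--4), never case 5). Second, you say the vanishing of $\pi_2\OrbffX$ ``is not needed'' for the $\pi_1$ sequence; in fact the paper uses $\pi_2\OrbffX=0$ (just proved) to obtain the leading $0$ in
\[
0\to \pi_1\StabIdfX \xrightarrow{~i_1~} \pi_1\DiffIdMX \xrightarrow{~p_1~} \pi_1\OrbffX \xrightarrow{~\partial_1~} \pi_0\StabfX \xrightarrow{~i_0~} \pi_0\DiffMX,
\]
so that $\pi_1\DiffIdMX/\pi_1\StabIdfX$ in~\eqref{equ:pi1Of_gen_case} is a genuine quotient by an embedded subgroup, not merely a cokernel.
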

\begin{proof}
\eqref{equ:pinOf_n2}.
Suppose that $\StabIdfX$ is contractible.
Then from the exact sequence of homotopy groups of the fibration $p:\DiffMX\to\OrbfX$ we obtain that $\pi_n\OrbffX=\pi_n\DiffIdMX$ for $n\geq2$.

Now let $\StabIdfX\cong S^1$.
Then again $\pi_n\OrbffX=\pi_n\DiffIdMX$ for $n\geq3$, while for $n=2$ we get the following part of exact sequence:
\[
  0 \to \pi_2\DiffIdMX \xrightarrow{~p_2~} \pi_2\OrbffX \xrightarrow{~\partial_2~} \pi_1\StabfX \xrightarrow{~i_1~} \pi_1\DiffIdMX 
\]
In the proof of~\cite[Th.~1.9]{Maks:AGAG:2006} it was shown that the map $i_1$ is a monomorphism, so $\pi_2\OrbffX=\pi_2\DiffIdMX$ as well.
Exact values of groups $\pi_n\DiffIdMX$ follow from Lemma~\ref{lm:hom_type_DiffIdMX}.

\eqref{equ:pi1Of_gen_case}.
Since $\pi_2\OrbffX=0$, we have the following exact sequence:
\begin{multline*}
 0 \to \pi_1\StabIdfX \xrightarrow{~i_1~} \pi_1\DiffIdMX \xrightarrow{~p_1~} \pi_1\OrbffX \xrightarrow{~\partial_1~} \\
   \xrightarrow{~\partial_1~}\pi_0\StabfX \xrightarrow{~i_0~} \pi_0\DiffMX,
\end{multline*}
which implies~\eqref{equ:pi1Of_gen_case}.

Finally~\eqref{equ:pi1Of_case5} follows from~\eqref{equ:pi1Of_gen_case}, and~\eqref{equ:pi1Of_case4} from~\eqref{equ:pi1Of_gen_case} and Corollary~\ref{equ:OffX_OffY}.
\end{proof}

\medskip

\noindent{\bf Fundamental group $\pi_1\OrbffX$.}
The following theorem shows that the computations of $\pi_1\OrbffX$ almost always reduces to the case when $\Mman$ is either $D^2$, or $S^1\times I$, or $\MobiusBand$.
It extends \cite[Th.~1.8]{Maks:MFAT:2010} to the case when $\Xman$ is infinite.

\begin{theorem}\label{th:compute_pi1Of}{\rm c.f.~\cite[Th.~1.8]{Maks:MFAT:2010}.}
Suppose one the following conditions holds true:
\[
\text{{\rm(i)}~$\partial\Mman\not=\varnothing$;
\qquad 
{\rm(ii)}~$\chi(\Mman)<0$;
\qquad 
{\rm(iii)}~$\Xman$ is infinite.}
\]

Then there exist finitely many $\func$-adapted mutually disjoint compact subsurfaces $B_1,\ldots, B_n$ with the following properties:
\begin{itemize}
  \item 
$\Int B_i \cap \Xman\subset \Xman^0$;
  \item 
each $B_i$ is diffeomorphic either to $D^2$, or $S^1\times I$, or $\MobiusBand$;
  \item 
put $\Yman_i = \partial B_i \cup (B_i\cap \Xman^0)$, then 
\begin{equation}\label{equ:pi1Of_prod_pi1OfdBi}
\pi_1\OrbffX \approx 
\prod_{i=1}^{n} \pi_0\Stab'(\func|_{B_i}, \Yman_i).
\end{equation}
\end{itemize}
\end{theorem}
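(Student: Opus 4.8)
The plan is to reduce the computation of $\pi_1\OrbffX$ to a product of local contributions coming from pieces of $\Mman$ on which the diffeomorphism group is contractible, using the machinery already assembled above. By Corollary~\ref{equ:OffX_OffY} we may freely enlarge $\Xman$ to include $\partial\Mman$, so under hypothesis (i) we are immediately in case 4) or 5) of Lemma~\ref{lm:hom_type_DiffIdMX}; under (ii) or (iii) we are in case 5) directly. In all of these cases Theorem~\ref{th:OrbffX}, formulas~\eqref{equ:pi1Of_case5} and~\eqref{equ:pi1Of_case4}, gives $\pi_1\OrbffX \approx \pi_0\Stab'(\func,\Yman)$ for a suitable $\func$-adopted $\Yman\supset\partial\Mman$. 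Thus it suffices to produce the subsurfaces $B_i$ and establish the product formula~\eqref{equ:pi1Of_prod_pi1OfdBi} for $\pi_0\Stab'(\func,\Yman)$.

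The construction of the $B_i$ should follow \cite[Th.~1.8]{Maks:MFAT:2010}: cut $\Mman$ along a suitable system of regular level curves and arcs of $\func$ (avoiding $\singf$ except at points of $\Xman^0$), chosen so that each resulting piece $B_i$ is a disk, an annulus, or a M\"obius band, and so that each piece carries exactly one ``interesting'' critical component of a level set of $\func|_{B_i}$. I would take the cutting curves to lie in $\func^{-1}(c)$ for regular values $c$ separating critical values, so that each $B_i$ is $\func$-adopted and $\Int B_i\cap\Xman\subset\Xman^0$; the curves themselves, together with the relevant components of $\Xman^0$, form the adopted submanifold $\Yman_i=\partial B_i\cup(B_i\cap\Xman^0)$. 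Here one must handle the complementary ``trivial'' pieces — cylinders $S^1\times I$ and disks carrying no critical points — on which the relevant local stabilizer is connected, hence contributes nothing to the product.

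The heart of the argument is then the identification
\[
 \pi_0\Stab'(\func,\Yman) \;\approx\; \prod_{i=1}^{n}\pi_0\Stab'(\func|_{B_i},\Yman_i).
\]
A diffeomorphism $\dif\in\Stab'(\func,\Yman)$ is isotopic to $\id_\Mman$ (rel $\varnothing$), preserves $\func$, hence permutes the components of every level set; after an isotopy through $\func$-preserving diffeomorphisms fixed on $\Yman$ one may arrange that $\dif$ preserves each cutting curve and each $B_i$ setwise, and is the identity near all the cutting curves. This splits $\dif$ into a product $\prod_i \dif_i$ with $\dif_i$ supported in $B_i$, giving a homomorphism $\Stab'(\func,\Yman)\to\prod_i\pi_0\Stab'(\func|_{B_i},\Yman_i)$; one checks it is well defined on $\pi_0$, surjective (glue local diffeomorphisms fixed near $\partial B_i$), and injective (a $\dif$ mapping to $0$ is, component by component, $\func$-isotopic rel $\Yman_i$ to $\id_{B_i}$, and these isotopies assemble to a global one). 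The technical point needing care — and the main obstacle — is justifying the ``isotope $\dif$ to be the identity near the cutting curves while staying inside $\Stab'$'' step and the injectivity step: both require that a diffeomorphism of a neighbourhood of a regular level circle which preserves $\func$ and is isotopic to the identity is in fact $\func$-isotopic to the identity rel the level circle, which follows from the standard fibred structure of $\func$ near a regular level set together with $\pi_1$ of the circle of rotations, exactly as in \cite{Maks:MFAT:2010}. Once these local gluing/splitting lemmas are in place, the isomorphism~\eqref{equ:pi1Of_prod_pi1OfdBi} follows by combining them with the reduction of the first paragraph.
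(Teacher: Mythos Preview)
Your outline is correct and follows the same underlying strategy as the paper: reduce to $\pi_0\Stab'$ via Theorem~\ref{th:OrbffX}, then cut $\Mman$ into elementary pieces and use a deformation lemma near regular level circles to split $\Stab'$ as a product. The paper, however, organizes the argument more economically by a case split. For $\chi(\Mman)<0$ with $\Xman$ finite it simply cites \cite[Th.~1.8]{Maks:MFAT:2010}; for $\Xman$ infinite it first cuts along $\Xman^1\cup\Xman^2$ itself (Corollary~\ref{cor:decomp_Of}), where $\dif$ is \emph{already} fixed, so the only work is deforming $\dif$ to be the identity on a neighbourhood---this is isolated as Lemma~\ref{lm:deform_near_X} and Corollary~\ref{cor:DiffMX'_DiffMX}---and then invokes the finite-$\Xman$ case on each resulting piece $\Mman_i$ with $\chi(\Mman_i)<0$. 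Your unified description, cutting directly along auxiliary regular level curves, requires in addition showing that $\dif\in\Stab'$ preserves each such curve setwise and can then be made the identity on it; that is precisely the content already packaged in \cite{Maks:MFAT:2010}, so the paper's two-stage route avoids reproving it. The ``main obstacle'' you flag is exactly Lemma~\ref{lm:deform_near_X}.
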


The rest of the paper is devoted to proof of Theorems~\ref{th:StabIdf}, \ref{th:DiffMX_to_OrbfX_fibr}, and~\ref{th:compute_pi1Of}. 

\section{Proof of Theorem~\ref{th:compute_pi1Of}}
\label{sect:proof:th:compute_pi1Of}
The proof will be given at the end of this section and now we will establish one technical result.

Let $\func:S^1\times I\to I$ be the function defined by $\func(z,\tau)=\tau$.
For a non-empty subset $A\subset I$ denote by $\Stab_{A}$ the stabilizer $\Stab\bigl(\func, S^1\times A\bigr)$, i.e.\! the group of diffeomorphisms $\dif$ of $S^1\times I$ such that 
\begin{enumerate}
 \item
$\func\circ\dif=\func$, so $\dif(S^1\times \tau) = S^1\times \tau$ for all $\tau$;
 \item
$\dif$ is fixed on $S^1\times A$.
\end{enumerate}
Let $J \subset I=[0,1]$ be a \myemph{non-empty}, \myemph{closed}, and \myemph{connected} subset, $T$ be a closed neighbourhood of $J$ in $I$, and $T'$ be a closed neighbourhood of $T$ in $I$.
\begin{lemma}\label{lm:deform_near_X}
The inclusion $i:\Stab_{T} \subset \Stab_{J}$ is a homotopy equivalence, so there exists a homotopy  
\[
 H:\Stab_{J} \times [0,1] \longrightarrow \Stab_{J},
\]
such that $H_0=\id(\Stab_{J})$ and $H_1(\Stab_{J})\subset \Stab_{T}$.
Moreover, $H_{s}(\dif)=\dif$ on $S^1\times(I\setminus T')$ for all $s\in [0,1]$ and $\dif\in\Stab_{J}$, and in particular, $H_s$ is fixed on $\Stab_{T'}$.
\end{lemma}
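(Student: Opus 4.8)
The plan is to construct the homotopy $H$ explicitly by ``spreading out'' a diffeomorphism $\dif\in\Stab_J$ so that near $S^1\times J$ it becomes fixed, while leaving it unchanged outside $S^1\times T'$. Since every $\dif\in\Stab_J$ preserves each circle $S^1\times\tau$, it has the form $\dif(z,\tau)=(\rho(z,\tau),\tau)$ where, for each fixed $\tau$, the map $z\mapsto\rho(z,\tau)$ is a diffeomorphism of $S^1$; moreover for $\tau\in J$ this circle diffeomorphism is the identity. The first step is to reduce to the universal cover: lift each $\rho(\cdot,\tau)$ to a diffeomorphism $\tilde\rho(\cdot,\tau)$ of $\RRR$ commuting with the unit translation, normalized so that $\tilde\rho(\cdot,\tau)=\id_{\RRR}$ for $\tau\in J$ (this normalization is unforced because $J$ is connected and nonempty, so there is no monodromy ambiguity). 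Writing $\sigma(x,\tau)=\tilde\rho(x,\tau)-x$, which is $1$-periodic in $x$ and vanishes identically for $\tau\in J$, the diffeomorphism condition becomes $\partial_x\tilde\rho=1+\partial_x\sigma>0$.

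The second step is to choose a smooth function $\mu:I\to[0,1]$ with $\mu\equiv 0$ on a neighbourhood of $J$ contained in $T$, and $\mu\equiv 1$ on $I\setminus T$ (possible since $T$ is a closed neighbourhood of the closed connected set $J$), together with a second bump $\nu:I\to[0,1]$ with $\nu\equiv 1$ on $T$ and $\nu\equiv 0$ on $I\setminus\Int T'$. Define the homotopy on the level of lifts by
\[
\tilde\rho^{\,s}(x,\tau) \;=\; x + \bigl(1 - s\,\nu(\tau)\,(1-\mu(\tau))\bigr)\,\sigma(x,\tau),
\qquad s\in[0,1].
\]
For $s=0$ this is $\tilde\rho$; for $s=1$ the coefficient $1-\nu(\tau)(1-\mu(\tau))$ equals $\mu(\tau)$ on $T$, hence vanishes on a neighbourhood of $J$, so $\tilde\rho^{\,1}(\cdot,\tau)=\id$ there, i.e. $H_1(\dif)\in\Stab_T$; and for $\tau\notin\Int T'$ the coefficient $\nu(\tau)$ vanishes, so $\tilde\rho^{\,s}(\cdot,\tau)=\tilde\rho(\cdot,\tau)$ and $H_s(\dif)=\dif$ on $S^1\times(I\setminus T')$, which in particular fixes $\Stab_{T'}$ pointwise. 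Each $\tilde\rho^{\,s}(\cdot,\tau)$ commutes with the unit translation (since $\sigma$ is periodic), so it descends to a self-map of $S^1\times I$; one must check it is a diffeomorphism, i.e. $\partial_x\tilde\rho^{\,s}=1+(1-s\nu(1-\mu))\,\partial_x\sigma>0$. This is the point requiring the convexity trick: the coefficient $c=1-s\nu(1-\mu)$ lies in $[0,1]$, and $1+c\,\partial_x\sigma = (1-c)\cdot 1 + c\cdot(1+\partial_x\sigma)$ is a convex combination of $1>0$ and $\partial_x\tilde\rho=1+\partial_x\sigma>0$, hence is positive. Finally $H_s(\dif)$ fixes $S^1\times J$ because $\sigma\equiv 0$ there, so $H_s$ does land in $\Stab_J$, and $H$ is continuous in the $\Cinf$-topology because it is given by an explicit formula smooth in all variables including the parameter and the ``variable'' $\dif$ (through $\sigma$).

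The third step is to package this as the asserted homotopy equivalence: $H_0=\id$, $H_1$ retracts $\Stab_J$ into $\Stab_T$, and since $H_s$ is the identity on $\Stab_{T'}\subset\Stab_T$ for all $s$, in particular $H_s$ restricts to a homotopy of $\Stab_T$ into itself rel nothing that equals $i\circ H_1|_{\Stab_T}$ at $s=1$ and $\id_{\Stab_T}$ at $s=0$ — wait, more carefully: $H_1\circ i=\id_{\Stab_T}$ is immediate since $\dif\in\Stab_T$ already has $\sigma\equiv 0$ near... no, $\sigma$ need not vanish near $J$ for $\dif\in\Stab_T$, but the coefficient $\mu(\tau)$ does vanish there, so $H_1(\dif)=\id$ near $S^1\times J$, while $\dif$ itself already equals $\id$ on the larger set $S^1\times T$; one checks directly from the formula that if $\sigma\equiv 0$ on $S^1\times T$ then $\tilde\rho^{\,s}=\tilde\rho$ everywhere, so in fact $H_s|_{\Stab_T}=\id_{\Stab_T}$ for all $s$. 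Hence $i$ and $H_1$ are mutually inverse up to the homotopy $H$ (and the trivial homotopy on $\Stab_T$), proving $i$ is a homotopy equivalence.

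The main obstacle I anticipate is purely bookkeeping rather than conceptual: verifying that all constructions are continuous in the $\Cinf$ (weak or strong) topology on the relevant diffeomorphism groups, and that the passage $\dif\mapsto\sigma$ and back is continuous — this is where one must be slightly careful, but it follows from the explicit polynomial-in-$\sigma$ nature of the formula for $\tilde\rho^{\,s}$ and standard facts about composition and the topology on $\Ci{S^1\times I}{S^1\times I}$. A secondary point is the choice of the lift normalization: one should note that any $\dif\in\Stab_J$ is isotopic within $\Stab_J$ in a way compatible with choosing the lift to be the identity over $J$, which is automatic because the connected set $S^1\times J$ is mapped identically, pinning down the lift uniquely.
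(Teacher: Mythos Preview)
Your approach is essentially the paper's: lift $h$ to the universal cover, write it as $(x,\tau)\mapsto(x+\sigma(x,\tau),\tau)$ with $\sigma$ periodic in $x$ and vanishing on $J$, then damp $\sigma$ by a $\tau$-dependent scalar in $[0,1]$ and use the convex-combination trick to check the result stays a diffeomorphism. The paper phrases this via the shift map of the rotation flow, but your $\sigma$ is exactly its $\Delta(h)$ and your positivity check $1+c\,\partial_x\sigma>0$ is its condition $F(\alpha)>-1$.

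There is, however, a genuine slip in your bump-function bookkeeping that breaks the conclusion. You take $\mu\equiv 0$ only on some neighbourhood $T_0\subset T$ of $J$, with $\mu\equiv 1$ on $I\setminus T$. At $s=1$ you correctly compute that the coefficient equals $\mu(\tau)$ on $T$, and then write ``hence vanishes on a neighbourhood of $J$, so $\tilde\rho^{\,1}(\cdot,\tau)=\id$ there, i.e.\ $H_1(h)\in\Stab_T$''. But vanishing on $T_0$ only gives $H_1(h)\in\Stab_{T_0}$, and since $T_0\subset T$ this is a \emph{larger} group than $\Stab_T$; you have not shown $H_1(\Stab_J)\subset\Stab_T$. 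The auxiliary bump $\nu$ does not repair this, and consequently the final sentence of your third step (``$i$ and $H_1$ are mutually inverse up to the homotopy $H$'') is unjustified: $H_1$ does not land in $\Stab_T$, so $i\circ H_1$ is not even defined.

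The fix is immediate and recovers the paper's construction: drop $\nu$, and choose $\mu$ with $\mu\equiv 0$ on all of $T$ and $\mu\equiv 1$ on $\overline{I\setminus T'}$, the transition taking place in $T'\setminus T$ (this is precisely why $T'$ is in the hypothesis). The coefficient becomes $c=1-s(1-\mu)=s\mu+(1-s)$, which is the paper's formula; now $c=0$ on all of $T$ at $s=1$, giving $H_1(\Stab_J)\subset\Stab_T$, and $c=1$ on $I\setminus T'$ for every $s$. Your convexity check goes through unchanged. One small consequence: with this corrected $\mu$ you lose the property $H_s|_{\Stab_T}=\id$ (indeed the paper remarks afterward that $H_1$ is not a retraction); instead one checks $H_s(\Stab_T)\subset\Stab_T$ (using that $\sigma\equiv 0$ on $T$ for $h\in\Stab_T$), which is enough for the homotopy equivalence.
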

\begin{proof}
Identify $S^1$ with the unit circle in the complex plane $\CCC$ and define the following vector field $\FldA(z,\tau)=\frac{\partial}{\partial z}$ on $S^1\times I$ generating the flow
\[\FlowA:(S^1\times I)\times\RRR\to S^1\times I,
\qquad 
\FlowA(z,\tau,t) = (e^{2\pi i \tau} z, \tau).
\]
We claim that there exists a unique map $\Delta: \Stab_{J} \to \Ci{S^1\times I}{\RRR}$ continuous with respect to $\Cinf$-topologies and such that
\begin{enumerate}
 \item[(a)]
$\dif(z,\tau) = \FlowA\bigr(z,\tau, \Delta(\dif)(z,\tau)\bigr) 
= \Bigl( e^{2\pi i \cdot \Delta(\dif)(z,\tau)}z, \tau\Bigr)$.

 \item[(b)]
Let $Y\subset I$ be any closed connected subset containing $J$.
Then $\Delta(\dif)(z,\tau) = 0$ for $(\dif,\tau)\in \Stab_{Y}\times Y$.
In particular, $\Delta(\dif)(z,\tau) = 0$ for all $\tau\in J$.
\end{enumerate}
Indeed, let $Q:\RRR\times I\to S^1\times I$, $Q(t,\tau) = (e^{2\pi i t},\tau)$ be the universal covering map of $S^1\times I$.
Then each $\dif\in\Stab_{J}$ lifts to a unique map
\[
 \tdif=(\tdif_1,\tdif_2):\RRR\times I \to \RRR\times I
\]
such that $\dif\circ Q=Q\circ\tdif$ and $\tdif$ is fixed on $Q^{-1}(S^1\times J)$.
Put
\[
 \Delta(\dif)(t,\tau) = \tdif_1(t,\tau)-t.
\]
We claim that $\Delta$ satisfies conditions (a) and (b) above.

(a) 
Notice that
\begin{align*}
 Q\circ\tdif(t,\tau) &= \left(e^{2\pi i \tdif_1},\tdif_2\right),
&
 \dif\circ Q(t,\tau) &= \dif(e^{2\pi i t},\tau).
\end{align*}
Then from the the identity $\dif\circ Q=Q\circ\tdif$ we get
\begin{align*}
 \dif(z,\tau) &= \dif(e^{2\pi i t},\tau) = 
\left(e^{2\pi i \tdif_1},\tau\right) =
\left(e^{2\pi i t} \cdot e^{2\pi i[\tdif_1(t,\tau)-t]},\tau\right) = \\ &=
\left(e^{2\pi i\cdot \Delta(\dif)(t,\tau)} z,\tau\right) =
\FlowA\bigr(z,\tau, \Delta(\dif)(z,\tau)\bigr).
\end{align*}

(b) Let $\tau\in Y$ and $\dif\in\Stab_{Y}\subset \Stab_{J}$.
Since $\dif$ is fixed on $S^1\times Y$ and $Y$ is connected, it follows that the lifting $\tdif$ is fixed on $\RRR\times Y$, i.e. $\tdif(t,\tau)=(t,\tau)$ for all $(t,\tau)\in\RRR\times Y$.
This means that $\tdif_1(t,\tau)=t$, whence $\Delta(\dif)(t,\tau) = t-t=0$.

\medskip

Now fix any $\Cinf$-function $\mu:I\to[0,1]$ such that $\mu=0$ on $T$ and $\mu=1$ on $\overline{I\setminus T'}$, and defined the homotopy 
$H:\Stab_{J} \times [0,1] \longrightarrow \Stab_{J}$,
by
\[
 H(\dif, s) = \FlowA\bigr(z,\tau, (s\mu(\tau)+1-s)\cdot\Delta(\dif)(z,\tau)\bigr).
\]
We claim that $H$ satisfies statement of lemma.

1) First notice that $H_0=\id$.
Indeed,
\[
 H(\dif, 0)(z,\tau) = \FlowA\bigr(z,\tau, \Delta(\dif)(z,\tau)\bigr) 
\stackrel{~(a)~}{=\!=\!=} \dif(z,\tau).
\]

2) $H(\dif,s)$ is fixed on $S^1\times J$.
Indeed, if $(\tau\in J$, then $\Delta(\dif)(z,\tau)=0$, whence
\[
 H(\dif, s)(z,\tau) = \FlowA\bigr(z,\tau, 0\bigr)=(z,\tau).
\]

3) Let us verify that $H(\dif,s)$ is a diffeomorphism.
Notice $H(\dif,s)$ is obtained by substitution of a smooth function $\alpha=(s\mu+1-s)\cdot\Delta(\dif)$ into the flow map instead of time.
Then, due to~\cite{Maks:TA:2003}, $H(\dif,s)$ is a diffeomorphism if and only if the Lie derivative 
\begin{equation}\label{equ:Shift_is_diff}
\FldA(\alpha)>-1.
\end{equation}
In particular, since $\dif=H(\dif,0)$ is a diffeomorphism we have that $\FldA(\Delta(\dif))>-1$.
Hence 
\[
 \FldA\bigl( (s\mu+1-s)\cdot\Delta(\dif) \bigr) =
 \FldA(s\mu+1-s)\cdot\Delta(\dif) + (s\mu+1-s) \FldA\bigl(\Delta(\dif)\bigr).
\]
Since $\mu$ depends only on $\tau$, we obtain that $\FldA(s\mu+1-s)=0$, and so the first summand vanishes.
Moreover, $0\leq s\mu+1-s \leq 1$, whence we get the inequality:
\[ 
 \FldA\bigl( (s\mu+1-s)\cdot\Delta(\dif) \bigr) =(s\mu+1-s) \FldA\bigl(\Delta(\dif)\bigr) > -1,
\]
as well.
Thus $H(\dif,s)$ is a diffeomorphism.

4) Since $\func\circ\FlowA(z,\tau,t)=\func(z,\tau)$, it follows that $\func\circ H(\dif,s)=\func$ for all $(\dif,s)\in\Stab_{J}\times I$.
Thus $H(\dif,s)\in \Stab_{J}$.

5) Let us show that $H_1(\Stab_J)\subset \Stab_{T}$, i.e. $H(\dif,1)$ is fixed on $S^1\times T$.
Let $\tau\in T$.
Then $\mu(\tau)=0$, whence 
\[
 H(\dif, 1)(z,\tau) = \FlowA\bigr(z,\tau, (1\cdot \mu(\tau)+1-1)\cdot\Delta(\dif)(z,\tau)\bigr) = 
\FlowA(z,\tau, 0)=(z,\tau).
\]

6) Finally, let us verify that $H(\dif,s)=\dif$ on $S^1\times(I\setminus T')$.
Let $\tau\in I\setminus T'$.
Then $\mu(\tau)=1$, whence
\[
 H(\dif, s)(z,\tau) = \FlowA\bigr(z,\tau, (s \mu(\tau)+1-s)\cdot\Delta(\dif)(z,\tau)\bigr) = 
\FlowA\bigr(z,\tau, \Delta(\dif)(z,\tau)\bigr)=\dif(z,\tau).
\]
Lemma is proved.
\end{proof}

\begin{remark}
Notice that the map $H_1:\Stab_J\to\Stab_T$ is not a retraction.
\end{remark}

Let $\Xman$ be an $\func$-adopted submanifold with $X^0=\varnothing$ and $\Nman$ be a neighbourhood of $\Xman$.
For every connected component $\Yman$ of $\Xman$ let $\Nman_{\Yman}$ be the connected component of $\Nman$ containing $\Yman$.
\begin{definition}
Say that $\Nman$ is \myemph{$\func$-adopted} if it has the following properties.
\begin{enumerate}
 \item
$\overline{\Nman_{\Yman}}\cap \overline{\Nman_{\Yman'}}=\varnothing$ for any pair of distinct components $\Yman, \Yman'$ of $\Xman$.
 \item 
Let $\Yman$ be a connected component of $\Xman^1$.
Put $J=[0,1]$ if $\Yman$ is a boundary component of $\Mman$, and $J=[-1,1]$ otherwise.
Then there exists a diffeomorphism $q:S^1\times J \to \Nman_{\Yman}$ such that $q(S^1\times 0)=\Yman$, for each $t\in J$ the set $q(S^1\times t)$ is a regular component some level set of $\func$.
 \item
Let $\Yman$ be a connected component of $\Xman^2$, and $\gamma_1,\ldots,\gamma_n$ be the set of all boundary components of $\partial\Yman$ that belong to the interior $\Int{\Mman}$.
Then $\Nman_{\Yman}$ is obtained from $\Yman$ by gluing collars $C_i=S^1\times[0,1]$ to each of $\gamma_i$ along $S^1\times 0$, so that for every $t\in [0,1]$ the set $S^1\times t$ corresponds to some level set of $\func$.
\end{enumerate}
\end{definition}

As a consequence of Lemma~\ref{lm:deform_near_X} we get the following statement.
\begin{corollary}\label{cor:DiffMX'_DiffMX}
Let $\Xman\subset\Mman$ be an $\func$-adopted submanifold, and $\hat\Nman$ be an $\func$-adopted neighbourhood of $\Xman^1\cup \Xman^2$.
Denote $\hat\Xman = \Xman^0 \cup \hat\Nman$.
Then the inclusion \[\Stab(\func,\hat\Xman)\cap\Diff(\Mman,\hat\Xman) \ \subset \ \StabfX\cap\DiffMX\] is a homotopy equivalence.
In particular, so is the inclusion $\Stab'(\func,\hat\Xman) \subset \Stab'(\func,\Xman)$.
\end{corollary}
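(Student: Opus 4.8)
The plan is to build the homotopy equivalence $\Stab(\func,\hat\Xman)\cap\Diff(\Mman,\hat\Xman)\subset\StabfX\cap\DiffMX$ by treating each connected component of $\Xman^1\cup\Xman^2$ separately and, for components of $\Xman^1$ and for the glued collars attached to components of $\Xman^2$, applying Lemma~\ref{lm:deform_near_X} in the corresponding product chart. First I would observe that the condition $\overline{\Nman_\Yman}\cap\overline{\Nman_{\Yman'}}=\varnothing$ for distinct components makes the deformations performed near different components completely independent, so that it suffices to construct, for each component $\Yman$ of $\Xman^1\cup\Xman^2$, a homotopy of $\StabfX\cap\DiffMX$ supported in (a slightly shrunk) $\Nman_\Yman$, which pushes every diffeomorphism so that it becomes fixed on the prescribed neighbourhood of $\Yman$, and which is stationary outside $\Nman_\Yman$; then these finitely many homotopies are concatenated.

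For a component $\Yman$ of $\Xman^1$, by the definition of $\func$-adopted neighbourhood there is a diffeomorphism $q:S^1\times J\to\Nman_\Yman$ carrying the level sets $S^1\times t$ to regular components of level sets of $\func$. A diffeomorphism $\dif\in\StabfX$ preserves $\func$, hence permutes the components of each level set; since it is fixed on $\Yman=q(S^1\times 0)$ it preserves the component $q(S^1\times t)$ for each $t$ (near $t=0$ by continuity, and then for all $t\in J$ by connectedness of $J$ — this uses that the components in the chart form a product foliation). Therefore, after conjugating by $q$, the restriction $\dif|_{\Nman_\Yman}$ lies in the group $\Stab_J$ of Lemma~\ref{lm:deform_near_X} with respect to the model function $(z,\tau)\mapsto\tau$, with $J$ a point (the boundary case) or an interval. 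Lemma~\ref{lm:deform_near_X} then provides a homotopy $H$ of $\Stab_J$ to $\Stab_T$ for a suitable closed neighbourhood $T$ of $J$, which is moreover stationary on $S^1\times(I\setminus T')$; transporting $H$ back through $q$ and extending by the identity on $\Mman\setminus\Nman_\Yman$ (legitimate precisely because $H_s(\dif)=\dif$ near the outer boundary of the chart) gives the required local homotopy. The components of $\Xman^2$ are handled the same way: the collars $C_i=S^1\times[0,1]$ glued to the interior boundary curves $\gamma_i$ of $\Yman$ carry product level-set structures, so on each $C_i$ we again invoke Lemma~\ref{lm:deform_near_X} to push $\dif$ to be fixed on a neighbourhood of $\gamma_i$ inside the collar, while on $\Yman$ itself no deformation is needed since $\dif$ is already fixed there. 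Running all these homotopies in succession produces $H^{(1)}:=H_1$ with $H_1(\StabfX\cap\DiffMX)\subset\Stab(\func,\hat\Xman)\cap\Diff(\Mman,\hat\Xman)$, and since each constituent homotopy is the identity on $\Stab(\func,\hat\Xman)\cap\Diff(\Mman,\hat\Xman)$ by the ``fixed on $\Stab_{T'}$'' clause of the lemma, the whole homotopy is a deformation retraction-like map witnessing that the inclusion is a homotopy equivalence. The final sentence of the corollary, the equivalence $\Stab'(\func,\hat\Xman)\subset\Stab'(\func,\Xman)$, follows immediately by intersecting everything with $\DiffIdMX$, which is unaffected by the above since all homotopies happen inside $\DiffMX$ and preserve isotopy classes (indeed the homotopy is through $\DiffMX$, so it does not leave $\DiffIdMX$).

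The main obstacle I anticipate is the bookkeeping needed to make ``extend by the identity outside $\Nman_\Yman$'' rigorous and smooth: Lemma~\ref{lm:deform_near_X} guarantees $H_s(\dif)$ agrees with $\dif$ on $S^1\times(I\setminus T')$, so after transport the modified diffeomorphism coincides with the original one on a neighbourhood of the frontier of $\Nman_\Yman$ in $\Mman$ and hence glues smoothly; but one must choose the neighbourhoods $T\subset T'\subset J$-collar nested strictly inside the chart $q(S^1\times J)$ so that $\hat\Nman$ (which is $q(S^1\times T)$-type on each $\Xman^1$-component and $\Yman\cup\bigcup_i q_i(S^1\times T_i)$ on each $\Xman^2$-component) is genuinely a neighbourhood of $\Xman^1\cup\Xman^2$ and the construction never touches $\Xman^0$. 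Verifying that a $\dif\in\StabfX$ really does preserve each individual leaf $q(S^1\times t)$ — rather than merely the union of leaves at a given $\func$-value — is the one genuinely non-formal point, but it is exactly the ``connected component of a level set'' structure recorded in $\partitf$ together with $\dif$ being fixed on $\Yman$, so it goes through. Everything else is a routine, if somewhat lengthy, patching argument.
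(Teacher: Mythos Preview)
Your approach is the paper's: work in product charts near each component of $\Xman^1\cup\partial\Xman^2$ and invoke Lemma~\ref{lm:deform_near_X} there, then patch using disjointness of the neighbourhoods $\Nman_{\Yman}$; the paper's own proof is a one-paragraph sketch that leaves exactly these gluing details to the reader, so you have in fact written out more than the paper does.

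One small slip worth flagging: you claim each constituent homotopy is the identity on $\Stab(\func,\hat\Xman)$, citing the ``fixed on $\Stab_{T'}$'' clause. But in your own setup $\hat\Nman$ corresponds locally to $S^1\times T$, and the lemma only guarantees $H_s$ is the identity on the \emph{smaller} subspace $\Stab_{T'}\subsetneq\Stab_T$; indeed the Remark immediately following Lemma~\ref{lm:deform_near_X} says explicitly that $H_1$ is \emph{not} a retraction. This does not break the argument: from property~(b) of $\Delta$ in the proof of the lemma one has $\Delta(\dif)|_{S^1\times T}=0$ whenever $\dif\in\Stab_T$, so $H_s(\dif)$ is still fixed on $S^1\times T$, i.e.\ $H_s$ \emph{preserves} $\Stab_T$. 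That is all you need for the inclusion to be a homotopy equivalence (restrict $H_s$ to $\Stab_T$ to get the other homotopy $H_1\circ i\simeq\id_{\Stab_T}$), so just replace ``is the identity on'' by ``preserves'' and your proof goes through.
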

\begin{proof}
Let $\dif \in \StabfX\cap\DiffMX$.
We should construct a ``canonical'' deformation of $\dif$ in $\StabfX$ to a diffeomorphism fixed on a neighbourhood $\hat\Nman$ on $\Xman^1\cup \Xman^2$, and this deformation should be supported in some neighbourhood of $\Xman^1\cup\partial\Xman^2$.

Let $\Yman$ be a connected component of $\Xman^1\cup\partial\Xman^2$.
Then $\Yman$ has a neighbourhood $\Uman$ diffeomorphic to the cylinder $S^1\times I$ such that each set $S^1\times \tau$ is a regular component of some level set of $\func$.
Then by Lemma~\ref{lm:deform_near_X} there exists a deformation of $\Stab(\func|_{\Uman}, \Uman\cap\Xman)$ into $\Stab(\func|_{\Uman}, \Uman\cap\hat\Nman)$ with supports in $\Int{\Uman}$.

Applying this to each connected component of $\Xman^1\cup\partial\Xman^2$ we will get a deformation retraction of $\StabfX\cap\DiffMX$ onto $\Stab(\func,\hat\Xman)\cap\Diff(\Mman,\hat\Xman)$.
The details are left to the reader.
\end{proof}

\begin{corollary}\label{cor:decomp_Of}
Suppose $\Xman^1\cup\Xman^2\not=\varnothing$.
Let $\Mman_1,\ldots,\Mman_n$ be the closures of the connected components of $\Mman\setminus(\Xman^1\cup\Xman^2)$, and $\Yman_i = \Mman_i \cap \Xman$.
Then we have the following commutative diagram consisting of isomorphisms:
\begin{equation}\label{equ:CD:iso_p1Of_pi0S}
\begin{CD}
 \pi_1\OrbffX @>{\mu}>> \prod_{i=1}^{n} \pi_1\Orbit(\func|_{\Mman_i},\Yman_i) \\
@V{\partial_1}VV  @VV{\prod_{i=1}^{n}(\partial_1)_i}V \\
 \pi_0\Stab'(\func,\Xman) @>{\eta}>>
\prod_{i=1}^{n} \pi_0\Stab'(\func|_{\Mman_i}, \Yman_i).
\end{CD}
\end{equation}
for some isomorphisms $\mu$ and $\eta$, where $(\partial_1)_i:\pi_1\Orbit(\func|_{\Mman_i},\Yman_i) \to\pi_0\Stab'(\func|_{\Mman_i}, \Yman_i)$ is the boundary homomorphism.
\end{corollary}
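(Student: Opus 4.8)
The plan is to enlarge $\Xman$ to an $\func$-adopted set that freezes a whole neighbourhood of $\Xman^1\cup\Xman^2$, so that $\Mman$ genuinely splits into the pieces $\Mman_i$, and then to convert the statement about orbits into one about $\pi_0\Stab'$ by means of Theorem~\ref{th:OrbffX}.

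\emph{Reduction to stabilizers.}
Since $\Xman^1\cup\Xman^2\ne\varnothing$, the set $\Xman$ is infinite, so by Lemma~\ref{lm:hom_type_DiffIdMX} the group $\DiffIdMX$ is contractible; hence by~\eqref{equ:pi1Of_case5} of Theorem~\ref{th:OrbffX} the boundary homomorphism $\partial_1\colon\pi_1\OrbffX\to\pi_0\Stab'(\func,\Xman)$ is an isomorphism. I would then check that the same holds for each triple $(\Mman_i,\func|_{\Mman_i},\Yman_i)$: each $\Mman_i$ is a compact connected surface; $\func|_{\Mman_i}$ satisfies \AxBd\ and \AxCrPt, because critical points and their local normal forms are inherited from $\func$, while every boundary circle of $\Mman_i$ --- a component of $\partial\Mman$, of $\Xman^1$, or of $\partial\Xman^2$ --- is a regular level component of $\func$; and $\Yman_i=\Mman_i\cap\Xman$ is $\func|_{\Mman_i}$-adopted, with $\Yman_i^2=\varnothing$ and $\Yman_i^1$ equal to the ``cut'' boundary circles of $\Mman_i$. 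As $\Mman$ is connected and $\Xman^1\cup\Xman^2\ne\varnothing$, the cut boundary of each $\Mman_i$ is non-empty, so $\Yman_i$ is infinite, $\DiffId(\Mman_i,\Yman_i)$ is contractible, and $(\partial_1)_i$ is an isomorphism too. Hence in~\eqref{equ:CD:iso_p1Of_pi0S} the two vertical arrows are already isomorphisms, and it suffices to produce an isomorphism $\eta$ for the bottom row.

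\emph{Splitting the stabilizers.}
Fix an $\func$-adopted neighbourhood $\hat\Nman$ of $\Xman^1\cup\Xman^2$ and put $\hat\Xman=\Xman^0\cup\hat\Nman$, $\hat\Yman_i=\Mman_i\cap\hat\Xman$. Each $\dif\in\Diff(\Mman,\hat\Xman)$, being the identity on the neighbourhood $\hat\Nman$ of $\Xman^1\cup\Xman^2$, fixes a point of every connected component of $\Mman\setminus(\Xman^1\cup\Xman^2)$, hence preserves each $\Mman_i$, and $\dif|_{\Mman_i}\in\Diff(\Mman_i,\hat\Yman_i)$. Conversely, as $\hat\Yman_i$ contains a collar of each cut circle of $\Mman_i$, any tuple of such restrictions glues, extending by $\id_{\Mman}$ over $\Int(\Xman^2)$, to an element of $\Diff(\Mman,\hat\Xman)$. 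Thus $\dif\mapsto(\dif|_{\Mman_i})_i$ is an isomorphism of topological groups $\Diff(\Mman,\hat\Xman)\xrightarrow{\,\approx\,}\prod_i\Diff(\Mman_i,\hat\Yman_i)$; it carries $\Stab(\func,\hat\Xman)$ onto $\prod_i\Stab(\func|_{\Mman_i},\hat\Yman_i)$, since $\func\circ\dif=\func$ if and only if $\func|_{\Mman_i}\circ\dif|_{\Mman_i}=\func|_{\Mman_i}$ for all $i$, and being a group isomorphism it carries identity components onto identity components. Therefore it restricts to an isomorphism $\Stab'(\func,\hat\Xman)\xrightarrow{\,\approx\,}\prod_i\Stab'(\func|_{\Mman_i},\hat\Yman_i)$, whence $\pi_0\Stab'(\func,\hat\Xman)\approx\prod_i\pi_0\Stab'(\func|_{\Mman_i},\hat\Yman_i)$.

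\emph{Conclusion.}
By Corollary~\ref{cor:DiffMX'_DiffMX} the inclusion $\Stab'(\func,\hat\Xman)\subset\Stab'(\func,\Xman)$ is a homotopy equivalence, and so is $\Stab'(\func|_{\Mman_i},\hat\Yman_i)\subset\Stab'(\func|_{\Mman_i},\Yman_i)$ for each $i$ (apply that corollary to $(\Mman_i,\Yman_i,\Mman_i\cap\hat\Nman)$, noting that $\Mman_i\cap\hat\Nman$ is an $\func|_{\Mman_i}$-adopted neighbourhood of $\Yman_i^1$). Composing the induced $\pi_0$-isomorphisms with the one of the previous paragraph gives an isomorphism $\eta\colon\pi_0\Stab'(\func,\Xman)\to\prod_i\pi_0\Stab'(\func|_{\Mman_i},\Yman_i)$; one checks, moreover, that $\eta$ is the map induced by restriction $\dif\mapsto(\dif|_{\Mman_i})_i$, because the deformations furnished by Corollary~\ref{cor:DiffMX'_DiffMX} move along level curves of $\func$ and hence preserve each $\Mman_i$. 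Since $\partial_1$ and $\prod_i(\partial_1)_i$ are isomorphisms, setting $\mu:=\bigl(\textstyle\prod_i(\partial_1)_i\bigr)^{-1}\circ\eta\circ\partial_1$ makes the square of~\eqref{equ:CD:iso_p1Of_pi0S} commute by construction. The genuine content is already contained in Corollary~\ref{cor:DiffMX'_DiffMX} --- the possibility of pushing a stabilizing diffeomorphism off to the identity near $\Xman^1\cup\Xman^2$; once that is granted, the remaining work is the routine (if somewhat lengthy) verification that all the cited results apply to every triple $(\Mman_i,\func|_{\Mman_i},\Yman_i)$ and that the chosen homotopy equivalences are compatible with restriction.
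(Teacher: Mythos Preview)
Your proposal is correct and follows essentially the same route as the paper: reduce to $\pi_0\Stab'$ via~\eqref{equ:pi1Of_case5} using that $\Xman$ and each $\Yman_i$ are infinite, thicken $\Xman$ to $\hat\Xman=\Xman^0\cup\hat\Nman$ by Corollary~\ref{cor:DiffMX'_DiffMX}, and then observe that the restriction map $\dif\mapsto(\dif|_{\Mman_i})_i$ is an isomorphism $\Stab'(\func,\hat\Xman)\to\prod_i\Stab'(\func|_{\Mman_i},\hat\Yman_i)$ because the supports are disjoint. Your write-up supplies more of the routine verifications (that $\func|_{\Mman_i}$ satisfies the axioms, that $\Yman_i$ is $\func|_{\Mman_i}$-adopted and infinite, that the homotopy equivalences of Corollary~\ref{cor:DiffMX'_DiffMX} are compatible with restriction) than the paper does, but the argument is the same.
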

\begin{proof}
Since $\Xman$ and each $\Yman_i$ is infinite, it follows from~\eqref{equ:pi1Of_case5} that $\partial_1$ and $(\partial_1)_i$ are isomorphisms, and so are the vertical arrows.
It is sufficient to define $\eta$, then $\mu$ will be uniquely determined.

Let $\hat\Nman$ be an $\func$-adopted neighbourhood of $\Xman^1\cup\Xman^2$ and $\hat\Xman=\Xman^0\cup\hat\Nman$.
Denote $\hat\Yman_i = \Mman_i \cap \hat\Xman$, $i=1,\ldots,n$.
Then by Corollary~\ref{cor:DiffMX'_DiffMX} we have isomorphisms
$\pi_0\Stab'(\func,\Xman) \approx \pi_0\Stab'(\func,\hat\Xman)$, and 
$\pi_0\Stab'(\func|_{\Mman_i}, \Yman_i) \approx \pi_0\Stab'(\func|_{\Mman_i}, \hat\Yman_i)$.

Notice that the following map
\[
 \eta': \Stab'(\func,\hat\Xman) \longrightarrow \prod_{i=1}^{n} \Stab'(\func|_{\Mman_i},\hat\Yman_i),
\qquad 
\eta'(\dif) = \bigl(\dif|_{\Mman_1}, \ldots, \dif|_{\Mman_n} \bigr)
\]
is a group isomorphism, since the restrictions $\dif|_{\Mman_i}$, $i=1,\ldots,n$, have disjoint supports.
Therefore $\eta'$ it induces an isomorphism $\eta$ from~\eqref{equ:CD:iso_p1Of_pi0S}.
\end{proof}

\subsection*{Proof of Theorem~\ref{th:compute_pi1Of}}
Consider the following cases.
{(a)}~{\em $\chi(\Mman)<0$ and $\Xman$ is finite.}
For $\Xman=\varnothing$ the result was proved in~\cite[Th.~1.8]{Maks:MFAT:2010}.
However, the analysis of the proof shows that the same arguments hold for $\Xman\subset\singf$ as well.

{(b)}~{\em $\chi(\Mman)<0$, $\varnothing \not= \partial\Mman \subset \Xman \subset \partial\Mman \cup \singf$.}
By Corollary~\ref{equ:OffX_OffY} we have that $\OrbffX=\Orbit(\func,\Xman^0)$, whence the decomposition from (a) holds in this case.

{(c)}~{\em $\Xman$ is infinite.}
Again by Corollary~\ref{equ:OffX_OffY} we can assume that $\partial\Mman\subset\Xman$.
Then by Corollary~\ref{cor:decomp_Of} we can write
\[
 \pi_1\OrbffX \approx \prod_{i=1}^{n} \pi_1\Orbit(\func|_{\Mman_i},\Yman_i),
\]
where $\Mman_1,\ldots,\Mman_n$ are the closures of the connected components of $\Mman\setminus(\Xman^1\cup\Xman^2)$, and $\Yman_i=\partial\Mman_i\cup(\Mman_i\cap\Xman^0)\not=\varnothing$.
If $\chi(\Mman_i)\geq0$, then $\Mman_i$ is either a $D^2$, $S^1\times I$, or $\MobiusBand$, and so it satisfies the statement of theorem.
Otherwise, $\chi(\Mman_i)<0$, and we can decompose $\pi_1\Orbit(\func|_{\Mman_i},\Yman_i)$ by the case (b).

Evidently, (a)-(c) include all the cases (i)-(iii). 
Theorem is completed.

\section{Axioms for a map $\func:\Mman\to\Pman$}
In this section we will present additional three axioms \AxVF-\AxRestr\ for a smooth map $\func:\Mman\to\Pman$ satisfying axiom \AxBd.
These axioms are consequences of \AxCrPt.
In the last two sections we will prove Theorems~\ref{th:StabIdf} and \ref{th:DiffMX_to_OrbfX_fibr} for maps $\func:\Mman\to\Pman$ satisfying \AxBd-\AxRestr.

First we introduce some notation.
For a vector field $\FldA$ on $\Mman$ tangent to $\partial\Mman$ denote by $\FlowA:\Mman\times\RRR\to\Mman$ the flow of $\FldA$, and by $\ShA:\Ci{\Mman}{\Pman}$ the \myemph{shift map} of $\FldA$ defined by \[\ShA(\afunc)(x)=\FlowA(x,\afunc(x))\] for $\afunc\in\Ci{\Mman}{\Pman}$ and $x\in\Mman$.

Say that a vector-field $\FldA$ on $\Mman$ is \emph{skew-gradient} with respect to $\func$, if $\FldA(\func)\equiv 0$,  and $\FldA(z)=0$ if and only if $z$ is a critical point of $\func$.
In particular, $\func$ is constant along orbits of $\FldA$.

Let $\Mman$ be a non-orientable compact surface. 
Then we will always denote by $\DoubleCover:\tMman\to\Mman$ the orientable double covering of $\Mman$ and by $\Invol$ the orientation reversing involution of $\tMman$ which generates the group $\ZZZ_2$ of covering transformations of $\tMman$. 

Moreover, for a $\Cinf$-map $\func:\Mman\to\Pman$ we put $\tfunc=\DoubleCover\circ\func:\tMman\to\Pman$, and denote by $\tDifftM$ the group of diffeomorphisms $\dif$ of $\tMman$ commuting with $\Invol$, i.e. $\tdif\circ\Invol=\Invol\circ\tdif$.
Let also $\tStabtf=\{\tdif\in\tDifftM \mid \tfunc\circ\tdif=\tfunc\}$ be the stabilizer of $\tfunc$ with respect to the right action of the group $\tDifftM$, and $\tStabIdtf$ be the identity path component of $\tStabtf$. 

Notice that each $\tdif\in\tDiffIdtM$ induces a unique diffeomorphism $\dif$ of $\Mman$, and the correspondence $\tdif\mapsto\dif$ is a homeomorphism $\nu:\tDiffIdtM \to \DiffIdM$, which induces a homeomorphism $\nu:\tStabIdtf\to\StabIdf$.

\begin{axiom}{\AxVF}
Suppose $\Mman$ is orientable.
Then there exists a \myemph{skew-gradient} with respect to $\func$ vector field $\FldA$ on $\Mman$ satisfying the following conditions.

\begin{itemize}
\item
Consider the following \myemph{convex} subset of $\Ci{\Mman}{\RRR}$:
\[\Gamma =\{\afunc\in\Ci{\Mman}{\RRR} \ | \ \FldA(\afunc)>-1\}.\]
Then $\ShA(\Gamma)=\StabIdf$.
\item
If $\func$ has a critical point which is either non-extremal or degenerate extremal, then $\ShA|_{\Gamma}:\Gamma\to\StabIdf$ is a homeomorphism with respect to $\Cinf$-topologies, and so $\StabIdf$ is contractible.

Otherwise, $\ShA|_{\Gamma}:\Gamma\to\StabIdf$ is a $\ZZZ$-covering map, and $\StabIdf\cong S^1$.
In this case there is a \myemph{strictly positive} function $\theta\in\Gamma$ such that for any $\afunc,\bfunc\in\Gamma$ we have that $\ShA(\afunc)=\ShA(\bfunc)$ if and only if $\afunc-\bfunc = n\theta$ for some $n\in\ZZZ$.
\end{itemize}

If $\Mman$ is non-orientable, then there exists a \myemph{skew-gradient} with respect to $\tfunc$ vector field $\FldA$ on $\tMman$ satisfying the following conditions.
\begin{itemize}
\item 
$\FldA$ is skew-symmetric with respect to $\Invol$, in the sense that $\Invol^{*}\FldA = -\FldA$, which is equivalent to the assumption that $\FlowA_{\theta}\circ\Invol=\Invol\circ\FlowA_{-\theta}$ for all $\theta\in\RRR$.

\item
Define the following \myemph{convex} subset $\Ci{\tMman}{\RRR}$:
\[\widetilde\Gamma =\{\afunc\in\Ci{\tMman}{\RRR} \ | \ \FldA(\afunc)>-1, \ \afunc\circ\Invol=-\afunc\}.\]
Then $\ShA(\widetilde\Gamma)=\tStabIdtf$, and the restriction of shift map $\ShA:\widetilde\Gamma\to\tStabIdtf$ is a homeomorphism with respect to $\Cinf$-topologies, whence $\tStabIdtf$ and $\StabIdf=\nu\bigl(\tStabIdtf\bigr)$ are contractible.
\end{itemize}
\end{axiom}

\begin{axiom}{\AxFibr}
The map $p:\DiffM\to\Orbf$ defined by $p(\dif) = \func\circ\dif$ for $\dif\in\DiffM$ is a Serre fibration.
\end{axiom}

\begin{axiom}{\AxRestr}
Let $\Yman \subset \Mman$ be a subsurface such that $\func|_{\Yman}$ satisfies \AxBd.
If $\func$ also satisfies \AxVF\ and \AxFibr, then so does $\func|_{\Yman}$.
\end{axiom}

The following lemma summarizes certain results obtained in~\cite{Maks:AGAG:2006, Maksymenko:ProcIM:ENG:2010}.

\begin{lemma}\label{lm:SuffCondAx}\cite{Maksymenko:ProcIM:ENG:2010}
Axioms \AxBd\ and \AxCrPt\ imply all other axioms \AxVF-\AxRestr.
\end{lemma}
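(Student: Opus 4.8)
The plan is to prove, for an arbitrary $\func:\Mman\to\Pman$ satisfying $(\AxBd)$ and $(\AxCrPt)$, the three implications $\Rightarrow(\AxVF)$, $\Rightarrow(\AxFibr)$, $\Rightarrow(\AxRestr)$ in that order; the construction of the skew-gradient field of $(\AxVF)$ and the analysis of its shift map is the substantive part, while $(\AxFibr)$ and $(\AxRestr)$ then follow quickly. \emph{Construction of $\FldA$.} Assume first that $\Mman$ is orientable and fix an area form. Away from $\singf$ the map $\func$ is a submersion, so its skew-gradient $\sgrad\func$ is nowhere zero, tangent to the level sets of $\func$, and, by $(\AxBd)$ (since $\func$ is locally constant along $\partial\Mman$), tangent to $\partial\Mman$. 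Near a critical point $z$, axiom $(\AxCrPt)$ supplies a chart in which $\func$ becomes a homogeneous polynomial $\gfunc=L_1\cdots L_\alpha\cdot Q_1\cdots Q_\beta$ with no repeated factors; there I would take $\sgrad\gfunc$, which annihilates $\gfunc$ and vanishes \emph{only} at the origin — precisely because $\gfunc$ has no multiple factor. Patching $\sgrad\func$ with the finitely many local fields $\sgrad\gfunc$ by a partition of unity produces a global skew-gradient field $\FldA$ with $\FldA(\func)\equiv0$. When $\Mman$ is non-orientable there is no global area form, so I would run the same construction $\Invol$-equivariantly on the orientable double cover $\tMman$, obtaining $\Invol^{*}\FldA=-\FldA$; the anti-invariance $\afunc\circ\Invol=-\afunc$ in the definition of $\widetilde\Gamma$ is what will rule out periodicity and force contractibility of $\StabIdf$ in that case.

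\emph{The shift map.} The inclusion $\ShA(\Gamma)\subset\StabIdf$ is routine: $\FldA(\func)\equiv0$ gives $\func\circ\ShA(\afunc)=\func$; the inequality $\FldA(\afunc)>-1$ makes $\ShA(\afunc)$ a diffeomorphism by \cite{Maks:TA:2003}; and since $\Gamma$ is convex with $0\in\Gamma$, the path $t\mapsto\ShA(t\afunc)$ is an isotopy from $\id_{\Mman}$ to $\ShA(\afunc)$ inside $\StabIdf$. The reverse inclusion $\StabIdf\subset\ShA(\Gamma)$ is the core of the argument. Given $\dif\in\StabIdf$, a $\func$-preserving isotopy to $\id_{\Mman}$ forces $\dif$ to preserve every element of $\partitf$; on a neighbourhood of each regular component of a level set $\dif$ restricts to a fibrewise orientation-preserving diffeomorphism of a cylinder $S^1\times I$, which — lifting along the universal cover of the circle exactly as in the proof of Lemma~\ref{lm:deform_near_X} — is a shift $\ShA(\afunc)$ with $\FldA(\afunc)>-1$; near a critical point one uses the homogeneous normal form of $(\AxCrPt)$ together with $\dif(z)=z$ to realize $\dif$ as a local shift there as well; and a patching argument glues these local shift functions into a single $\afunc\in\Gamma$ with $\ShA(\afunc)=\dif$. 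Finally the dichotomy is governed by $\ker\ShA|_{\Gamma}$: one has $\ShA(\afunc)=\ShA(\bfunc)$ iff $\afunc-\bfunc$ is a period function of $\FldA$. If $\func$ has a non-extremal or degenerate extremal critical point, the only period function is $0$, so $\ShA|_{\Gamma}$ is injective, hence a homeomorphism onto $\StabIdf$, which is therefore contractible; otherwise $\func$ is of one of the types (A)--(D), $\FldA$ admits a strictly positive period function $\theta\in\Gamma$, and $\ShA|_{\Gamma}:\Gamma\to\StabIdf$ is a $\ZZZ$-covering, so $\StabIdf\cong\Gamma/\ZZZ\theta\cong S^1$.

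\emph{Axioms $(\AxFibr)$ and $(\AxRestr)$.} For $(\AxFibr)$: the local models of $(\AxCrPt)$ show that the orbit $\Orbff$ has finite codimension in $\Ci{\Mman}{\Pman}$, so by Sergeraert's theorem \cite{Sergeraert:ASENS:1972} (explained in \cite[Lm.~11]{Maksymenko:ProcIM:ENG:2010}) the map $p:\DiffM\to\Orbf$ admits a continuous local section near every $\gfunc\in\Orbf$; the existence of such local sections yields the covering homotopy property, so $p$ is a Serre fibration. For $(\AxRestr)$: by $(\AxBd)$ for $\func|_{\Yman}$, every critical point of $\func|_{\Yman}$ lies in $\Int\Yman$ and is therefore a critical point of $\func$ with the same local presentation, so $(\AxCrPt)$ passes from $\func$ to $\func|_{\Yman}$; hence $\func|_{\Yman}$ satisfies both $(\AxBd)$ and $(\AxCrPt)$, and by the two implications already established it satisfies $(\AxVF)$ and $(\AxFibr)$.

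\emph{Main obstacle.} The genuinely hard step is the surjectivity $\StabIdf\subset\ShA(\Gamma)$, i.e.\ showing that every $\func$-preserving diffeomorphism isotopic to the identity is \emph{globally} a shift along $\FldA$. This is exactly where $(\AxCrPt)$ is indispensable: the absence of multiple factors in the homogeneous local models is what makes the skew-gradient field non-degenerate near the critical points, so that the orbit structure of $\FldA$ is rigid enough for the local-to-global patching to succeed. This analysis occupies the bulk of \cite{Maks:AGAG:2006, Maksymenko:ProcIM:ENG:2010}, and it is those results that Lemma~\ref{lm:SuffCondAx} quotes.
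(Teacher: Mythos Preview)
Your proposal is correct and follows essentially the same route as the paper: both reduce the lemma to the results of \cite{Maks:AGAG:2006, Maksymenko:ProcIM:ENG:2010}, with the paper invoking the intermediate axiom system (A1)--(A3) of \cite{Maksymenko:ProcIM:ENG:2010} (namely $\AxBd\,\&\,\AxCrPt\Rightarrow$(A1)--(A3) by \cite[Lm.~12]{Maksymenko:ProcIM:ENG:2010} and (A1)--(A3)$\Rightarrow\AxVF$ by \cite[Th.~3]{Maksymenko:ProcIM:ENG:2010}), while you sketch the underlying construction of the skew-gradient field and shift map directly. Your treatment of $\AxFibr$ via Sergeraert and of $\AxRestr$ via the observation that $\AxCrPt$ is inherited by $\func|_{\Yman}$ matches the paper's exactly.
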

\begin{proof}
In the paper~\cite{Maksymenko:ProcIM:ENG:2010} the author introduced three axioms {\rm(A1)-(A3)} for a smooth map $\func:\Mman\to\Pman$ such that \AxBd=(A1), \AxFibr=(A3), 
\AxBd\&\AxCrPt$\Rightarrow$(A1)-(A3) by \cite[Lm.~12]{Maksymenko:ProcIM:ENG:2010}, and (A1)-(A3)$\Rightarrow$\AxVF\ by \cite[Th.~3]{Maksymenko:ProcIM:ENG:2010}.
To verify \AxRestr, suppose $\Yman \subset \Mman$ is a submanifold such that $\func|_{\Yman}$ satisfies \AxBd.
Then $\func|_{\Yman}$ satisfies \AxCrPt, and therefore all other axioms hold true.
\end{proof}

\section{Proof of Theorem~\ref{th:StabIdf}}\label{sect:proof_th_1}
The orientable case of Theorem~\ref{th:StabIdf} is contained in the following lemma:
\begin{lemma}\label{lm:hom_type_StabIdfX}
Suppose $\Mman$ is orientable, and $\func:\Mman\to\Pman$ satisfies \AxBd\ and \AxVF.
Let $\FldA$, $\FlowA$, $\ShA$, and $\Gamma$ be the same as in \AxVF.
Denote 
$\Gamma_{\Xman} = \{\afunc\in\Gamma \ \mid \ \alpha|_{\Xman^1\cup\Xman^2} = 0\}.$
Then 
\begin{equation}\label{equ:ShAGX_StabfIdfX}
\ShA(\Gamma_{\Xman}) = \StabIdfX.
\end{equation}
Moreover, $\StabIdfX\cong S^1$ iff $\StabIdf\cong S^1$ and $\Xman\subset\singf$.
Otherwise $\StabIdfX$ is contractible.
\end{lemma}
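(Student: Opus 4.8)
The plan is to exploit axiom \AxVF, which identifies $\StabIdf$ with the image of the convex set $\Gamma$ under the shift map $\ShA$, and to cut this identification down by the linear constraint of vanishing on $\Xman^1\cup\Xman^2$. First I would prove the equality \eqref{equ:ShAGX_StabfIdfX}. For the inclusion $\ShA(\Gamma_{\Xman})\subset\StabIdfX$: if $\afunc\in\Gamma_{\Xman}$ then $\ShA(\afunc)\in\StabIdf$ by \AxVF, and since $\afunc$ vanishes on $\Xman^1\cup\Xman^2$ and $\FlowA(x,0)=x$, the diffeomorphism $\ShA(\afunc)$ is fixed on $\Xman^1\cup\Xman^2$; it is automatically fixed on $\Xman^0\subset\singf$ because $\FldA$ vanishes at critical points, so $\FlowA(z,t)=z$ for all $t$ when $z$ is critical. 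Hence $\ShA(\afunc)\in\StabfX\cap\StabIdf=\StabIdfX$. For the reverse inclusion I would take $\dif\in\StabIdfX$, write $\dif=\ShA(\afunc)$ for some $\afunc\in\Gamma$ (possible since $\dif\in\StabIdf$), and argue that $\afunc$ can be chosen to vanish on $\Xman^1\cup\Xman^2$: since $\dif$ is fixed on each connected component $\omega$ of $\Xman^1$, and $\omega$ is a regular leaf of $\partitf$ along which $\FldA$ is nonvanishing, the function $\afunc|_{\omega}$ takes values in the period lattice of the corresponding closed orbit, i.e. is locally constant, hence constant; subtracting the appropriate multiple of the period (or, in the contractible case, using that the only such value is $0$) and using continuity, we may assume $\afunc|_{\omega}=0$. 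The same reasoning applies to the boundary components of $\Xman^2$, and then on the $2$-dimensional components the condition $\func|_{\Xman^2}$ satisfying \AxBd\ together with $\FldA$ being skew-gradient forces the adjustment to extend; alternatively, and more cleanly, I would invoke Corollary~\ref{cor:DiffMX'_DiffMX}-type reasoning only on a collar and patch. The key point is that the ambiguity of the lift $\afunc$ (a locally constant function, valued in $\{0\}$ or in $\ZZZ\theta$) can be absorbed.

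Next I would read off the homotopy type. The set $\Gamma_{\Xman}$ is convex (an intersection of the convex set $\Gamma$ with the linear subspace $\{\afunc|_{\Xman^1\cup\Xman^2}=0\}$), hence contractible. If $\ShA|_{\Gamma}$ is a homeomorphism — which by \AxVF\ happens exactly when $\func$ has a non-extremal or degenerate-extremal critical point, i.e. when $\StabIdf$ is \emph{not} $\cong S^1$ — then $\ShA|_{\Gamma_{\Xman}}$ is also injective, so $\StabIdfX=\ShA(\Gamma_{\Xman})$ is contractible, regardless of $\Xman$. If instead $\StabIdf\cong S^1$, then by \AxVF\ the fibers of $\ShA|_{\Gamma}$ are the cosets $\afunc+\ZZZ\theta$ with $\theta$ strictly positive, and $\func$ has no critical points other than (possibly) non-degenerate extrema; then $\singf$ is finite, consisting only of such extrema, and $\Xman^0\subset\singf$. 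Here I distinguish two subcases. If $\Xman\subset\singf$ (equivalently $\Xman^1\cup\Xman^2=\varnothing$), then $\Gamma_{\Xman}=\Gamma$ and $\StabIdfX=\StabIdf\cong S^1$. If $\Xman^1\cup\Xman^2\neq\varnothing$, then $\theta$ is strictly positive and in particular $\theta|_{\Xman^1\cup\Xman^2}>0$, so no nonzero multiple $n\theta$ with $n\in\ZZZ$ lies in $\Gamma_{\Xman}$'s defining subspace together with a given $\afunc\in\Gamma_{\Xman}$: if $\afunc,\afunc+n\theta\in\Gamma_{\Xman}$ then $n\theta|_{\Xman^1\cup\Xman^2}=0$ forces $n=0$. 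Thus $\ShA|_{\Gamma_{\Xman}}$ is injective and $\StabIdfX$ is contractible.

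Combining: $\StabIdfX\cong S^1$ precisely when $\StabIdf\cong S^1$ and $\Xman\subset\singf$, and is contractible in all other cases, which is the assertion. The main obstacle I anticipate is the careful verification of the reverse inclusion in \eqref{equ:ShAGX_StabfIdfX} — namely that an arbitrary $\dif\in\StabIdfX$ admits a \emph{lift} $\afunc$ that already vanishes on all of $\Xman^1\cup\Xman^2$, not merely on each component up to an additive constant from the period lattice. On the $1$-dimensional components this is the argument above about locally constant functions valued in the period lattice; on the $2$-dimensional components, where $\func|_{\Xman^2}$ again satisfies \AxBd, one must check that $\dif|_{\Xman^2}=\id$ forces the lift to vanish there, which should follow because a skew-gradient flow restricted to $\Xman^2$ has the property that $\ShA(\afunc)|_{\Xman^2}=\id$ already pins $\afunc|_{\Xman^2}$ down to the period lattice of $\func|_{\Xman^2}$, and then the boundary vanishing propagates inward by continuity and connectedness of the relevant orbits. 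I would spell this out component-by-component, using \AxBd\ for $\func|_{\Xman^2}$ exactly at this point.
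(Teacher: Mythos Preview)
Your deduction of the homotopy type from \eqref{equ:ShAGX_StabfIdfX} is correct and matches the paper. The forward inclusion $\ShA(\Gamma_\Xman)\subset\StabIdfX$ is also essentially right, but note that your equality $\StabfX\cap\StabIdf=\StabIdfX$ is not a priori valid and is in fact false in general; the correct argument (which the paper uses) is that $\Gamma_\Xman$ is convex, so the path $t\mapsto\ShA(t\afunc)$ stays in $\StabfX$ and connects $\ShA(\afunc)$ to $\id_\Mman$.

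The real gap is in the reverse inclusion. Your plan is to pick \emph{some} lift $\afunc\in\Gamma$ of $\dif$ and adjust it to vanish on $\Xman^1\cup\Xman^2$. But the only adjustments compatible with $\ShA(\afunc)=\dif$ are $\afunc\mapsto\afunc+n\theta$ for a \emph{single} global $n\in\ZZZ$ (in the $S^1$ case) or none at all (in the contractible case). Your parenthetical ``in the contractible case, using that the only such value is $0$'' is not justified: regular leaves are periodic orbits of $\FldA$, so each $\Lambda_x$ is a nontrivial lattice even when $\ShA|_\Gamma$ is injective. And when $\Xman^1\cup\Xman^2$ has several components, one global shift by $n\theta$ need not simultaneously kill $\afunc$ on all of them. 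Concretely, for $\Mman=S^1\times I$, $\func(z,\tau)=\tau$, $\Xman=S^1\times\{a,b\}$, the map $\dif=\ShA(\sigma)$ with $\sigma(a)=0$, $\sigma(b)=1$ lies in $\StabfX\cap\StabIdf$ but not in $\StabIdfX$; any argument that only uses $\dif\in\StabfX\cap\StabIdf$ is therefore doomed.

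The paper exploits the extra information in $\dif\in\StabIdfX$ directly: choose an isotopy $(\dif_t)_{t\in[0,1]}$ in $\StabfX$ with $\dif_0=\id_\Mman$ and $\dif_1=\dif$, and lift the whole path through the covering map $\ShA|_\Gamma:\Gamma\to\StabIdf$ to a path $(\afunc_t)$ with $\afunc_0\equiv 0$. For each non-critical $x\in\Xman^1\cup\Xman^2$ one has $\afunc_t(x)\in\Lambda_x$ for all $t$; since $\Lambda_x$ is discrete, $t\mapsto\afunc_t(x)$ is continuous, and $\afunc_0(x)=0$, it follows that $\afunc_t(x)=0$. As $\singf$ is nowhere dense in $\Xman^1\cup\Xman^2$, continuity gives $\afunc_1|_{\Xman^1\cup\Xman^2}=0$, i.e.\ $\afunc_1\in\Gamma_\Xman$. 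This handles $\Xman^1$ and $\Xman^2$ uniformly, with no component-by-component bookkeeping and no appeal to Corollary~\ref{cor:DiffMX'_DiffMX}.
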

\begin{proof}
\eqref{equ:ShAGX_StabfIdfX}.
Let $\afunc\in\Gamma_{\Xman}$, i.e. $\afunc(x)=0$ for all $x\in\Xman^1\cup\Xman^2$.
Then $\ShA(\afunc)$ is fixed on $\Xman$, i.e. $\ShA(\afunc)\in\StabIdfX$.

Indeed, if $x\in\Xman^1\cup\Xman^2$, then $\ShA(\afunc)(x) = \FlowA(x,\afunc(x))=\FlowA(x,0)=x$. 
Moreover, every $x\in\Xman^0$ is a critical point of $\func$, and so $\FlowA(x,t)=x$ for all $t\in\RRR$.
In particular, $\ShA(\afunc)(x) = \FlowA(x,\afunc(x))=x$, and so $\ShA(\afunc)\in\StabfX$.

Since $\Gamma$ is connected, $\ShA(0)=\id_{\Mman}\in\StabfX$, and $\ShA$ is continuous, we get that $\ShA(\afunc)\in\StabIdfX$.

Conversely, suppose $\dif\in\StabIdfX$, so we have an isotopy $\dif_t:\Mman\to\Mman$ in $\StabIdfX$ between $\dif_0=\id_{\Mman}$ and $\dif_1=\dif$.
Since $\ShA$ induces a covering map of $\Gamma$ onto $\StabIdf$, we can lift the homotopy $\dif_t$ (regarded as a continuous path in $\StabIdfX\subset\StabIdf$) to $\Gamma$ and get a homotopy of functions $\afunc_t:\Mman\to\RRR$, $t\in[0,1]$, such that $\dif_t(x)=\FlowA(x,\afunc_t(x))$.
Moreover, as $\dif_0=\id_{\Mman}$ we can assume that $\afunc_0\equiv0$.
We claim that $\afunc_t\in\Gamma_{\Xman}$ i.e. $\afunc_t=0$ on $\Xman^1\cup\Xman^2$.

For each point $x\in\Xman^1\cup\Xman^2$ consider the set $\Lambda_x = \{ \tau\in\RRR \mid \FlowA(x,\tau)=x\}$ of periods of $x$, so $\afunc_t(x)\in\Lambda_x$ for all $t\in[0,1]$.
Notice that $\Lambda_x=\{0\}$  if $x$ is non-periodic point, $\Lambda_x=\theta_x\ZZZ$ for a periodic point of period $\theta_x$, and $\Lambda_x=\RRR$ if $x$ is  a fixed point, i.e. a critical point of $\func$.

Since for every non-fixed point $x$ the set $\Lambda_x$ is discrete and contains $0$, and $\afunc_0=0$, it follows that $\afunc_t=0$ on $(\Xman^1\cup\Xman^2)\setminus\singf$ for all $t\in\RRR$.
But $\singf$ is nowhere dense, therefore $\afunc_t=0$ on all of $\Xman^1\cup\Xman^2$, i.e. $\afunc_t\in\Gamma_{\Xman}$.
This proves~\eqref{equ:ShAGX_StabfIdfX}.

Now we can describe the homotopy type of $\StabIdfX$.
Notice that $\Gamma$ and $\Gamma_{\Xman}$ are convex subsets of $\Ci{\Mman}{\RRR}$ and therefore contractible.

1) If $\StabIdfX$ is contractible, i.e. $\ShA:\Gamma\to\StabIdf$ is a homeomorphism, then $\ShA:\Gamma_{\Xman}\to\StabIdfX$ is a homeomorphism as well, whence $\StabIdfX$ is also contractible.

2) Suppose $\StabIdfX\cong S^1$, so $\ShA:\Gamma\to\StabIdf$ is a $\ZZZ$-covering map.
Consider two cases.

a) Suppose $\Xman^1\cup\Xman^2=\varnothing$, and so $\Xman=\Xman^0\subset\singf$.
In this case $\Gamma_{\Xman}=\Gamma$, whence $\StabIdfX=\StabIdf \cong S^1$.

b) Let $\Xman^1\cup\Xman^2\not=\varnothing$.
Then the restriction map $\ShA|_{\Gamma_{\Xman}}:\Gamma_{\Xman}\to\StabIdfX$ is injective.
Hence due to~\eqref{equ:ShAGX_StabfIdfX} it is a homeomorphism onto.

Indeed, suppose $\ShA(\afunc)=\ShA(\bfunc)$ for some $\afunc,\bfunc\in\Gamma_{\Xman}$.
Then by \AxVF, $\afunc = \bfunc + n\theta$ for some $n\in\ZZZ$.
However, $\theta>0$ on all $\Mman$, while $\afunc=\bfunc=0$ on $\Xman^1\cup\Xman^2$.
Therefore $n=0$ and so $\afunc=\bfunc$ of all of $\Mman$.
\end{proof}

Suppose $\Mman$ is non-orientable.
Then $\tXman = \DoubleCover^{-1}(\Xman)$ is $\tfunc$-adopted submanifold of $\tMman$.
Let $\tStab(\tfunc,\tXman)$ be the subgroup of $\Stab(\tfunc,\tXman)$ consisting of diffeomorphisms commuting with $\Invol$ and $\tStabIdtftX$ be its identity path-component.
By the arguments similar to the proof of Lemma~\ref{lm:hom_type_StabIdfX} one can show that $\tStabIdtftX$ is contractible.
Moreover, the homeomorphism $\nu:\tStabIdtf\to\StabIdf$ maps $\tStabIdtftX$ onto $\StabIdfX$, whence 
$\StabIdfX$ is contractible as well.

\section{Proof of Theorem~\ref{th:DiffMX_to_OrbfX_fibr}}
\label{sect:proof_th_2}

Theorem~\ref{th:DiffMX_to_OrbfX_fibr} is contained in the following theorem. 
\begin{theorem}\label{th:p_DiffMX_to_OrbfX_is_Serre}
Suppose $\func:\Mman\to\Pman$ satisfies axioms \AxBd-\AxRestr.
Then the restriction map $p|_{\DiffMX}:\DiffMX\to\OrbfX$ is a Serre fibration as well.
\end{theorem}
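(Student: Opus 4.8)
The plan is to reduce the general statement to the already-known case $\Xman=\varnothing$ (Axiom \AxFibr, equivalently Theorem~\ref{th:DiffMX_to_OrbfX_fibr} for empty $\Xman$), exploiting that the map $p$ on $\DiffMX$ is the restriction of the map $p$ on $\DiffM$. Recall that a surjection $p\colon E\to B$ is a Serre fibration iff it has the homotopy lifting property with respect to all cubes $I^n$. So I fix a commutative square consisting of a map $\Phi\colon I^n\to\DiffMX$ and a homotopy $G\colon I^n\times I\to\OrbfX$ with $p\circ\Phi = G|_{I^n\times 0}$, and I must produce a lift $\widetilde G\colon I^n\times I\to\DiffMX$ with $\widetilde G|_{I^n\times 0}=\Phi$ and $p\circ\widetilde G = G$. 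Since $\DiffMX\subset\DiffM$ and $\OrbfX\subset\Orbf$, and since $p\colon\DiffM\to\Orbf$ is a Serre fibration by \AxFibr, I first get a lift $\widehat G\colon I^n\times I\to\DiffM$ of the square into the bigger space. The only thing that can go wrong is that $\widehat G$ need not be fixed on $\Xman$. The whole content of the proof is to correct $\widehat G$ to a lift with values in $\DiffMX$.

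The correction step is where I would do the real work, and it is the main obstacle. The idea: because $G$ takes values in $\OrbfX$, each $G(\xi,t)$ differs from $\func$ by a diffeomorphism fixed on $\Xman$, so in particular $G(\xi,t)=\func$ on $\Xman$ for all $(\xi,t)$ — hence $\widehat G(\xi,t)$ maps $\Xman$ into a level set structure compatible with $\func$, though it may move points of $\Xman$ within the partition elements. For the components $\Xman^0\subset\singf$: a diffeomorphism preserving $\func$ near an isolated critical point, and lying in the identity component, fixes that point; combined with \AxCrPt\ (the homogeneous-polynomial normal form) one controls behaviour on $1$-jets as well. For the components $\Xman^1$ (where $\func$ is locally constant and critical-point-free) and $\Xman^2$ (where $\func|_{\Xman^2}$ again satisfies \AxBd): using a skew-gradient vector field for $\func$ supplied by \AxVF, and the collar/cylinder normal forms of an $\func$-adopted neighbourhood $\hat\Nman$ of $\Xman^1\cup\Xman^2$ as in the definition preceding Corollary~\ref{cor:DiffMX'_DiffMX}, one builds a canonical $\func$-preserving isotopy pushing $\widehat G(\xi,t)$ back to be fixed on $\Xman$. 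Concretely, on each cylinder $S^1\times I$ normal-formed so that $\func$ is the projection to $I$, the deformation of Lemma~\ref{lm:deform_near_X} (and the flow/shift-map machinery of \cite{Maks:TA:2003} used there) provides exactly such a canonical, $\func$-preserving, support-localized correction; here \AxRestr\ guarantees that $\func$ restricted to these subsurfaces still satisfies \AxVF\ and \AxFibr\ so that the machinery applies.

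Assembling: I would define $\widetilde G(\xi,t) = \widehat G(\xi,t)\circ c(\xi,t)^{-1}$ (or a similar pre/post-composition), where $c(\xi,t)\in\StabIdf$ is the canonical $\func$-preserving diffeomorphism produced above that undoes the displacement of $\Xman$ by $\widehat G(\xi,t)$. One checks: (1) $\func\circ\widetilde G(\xi,t) = \func\circ\widehat G(\xi,t) = G(\xi,t)$, since $c(\xi,t)$ preserves $\func$, so $p\circ\widetilde G = G$; (2) $\widetilde G(\xi,t)$ is fixed on $\Xman$ by construction, so $\widetilde G$ lands in $\DiffMX$; (3) on $I^n\times 0$, where $\widehat G=\Phi$ is already fixed on $\Xman$, the canonical $c(\xi,0)$ is the identity, so $\widetilde G|_{I^n\times 0}=\Phi$; continuity of $c$ in $(\xi,t)$ (with respect to the $\Cinf$-topology) follows from the canonicity and the continuity statements in Lemma~\ref{lm:deform_near_X} and \AxVF. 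This yields the required lift, proving that $p|_{\DiffMX}\colon\DiffMX\to\OrbfX$ is a Serre fibration. The one subtlety to be careful about is that the correction on $\Xman^0$ must be compatible on the overlaps with the cylinder corrections near $\partial\Xman^2$ and near endpoints of arcs — but by property (1) in the definition of an $\func$-adopted neighbourhood the relevant neighbourhoods are disjoint, so the corrections can be patched with a partition of unity without interference.
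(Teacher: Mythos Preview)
Your plan follows the paper's approach closely: lift via the known Serre fibration $p:\DiffM\to\Orbf$ (Axiom \AxFibr), then correct the lift $\widehat G$ to land in $\DiffMX$ by composing with a canonical $\func$-preserving diffeomorphism supported near $\Xman$. The paper's Lemma~\ref{lm:X_is_invar} is exactly your observation that $\widehat G(\xi,t)$ already preserves each leaf of $\partitf$ lying in $\Xman$ (hence fixes $\Xman^0$), and its Lemma~\ref{lm:exists_sh_func} implements the correction near $\Xman^1$ and orientable components of $\Xman^2$ by lifting $\widehat G(\xi,t)|_{\Yman}\in\StabId(\func|_{\Yman})$ through the shift map $\ShA_{\Yman}$ to a function in $\Gamma_{\Yman}$, extending via Seeley's operator, and cutting off---this is the precise mechanism behind your ``canonical $\func$-preserving isotopy''. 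Note that Lemma~\ref{lm:deform_near_X} itself is not quite the right tool: it deforms diffeomorphisms already fixed on a small set to ones fixed on a larger set, whereas here you need to \emph{match} $\widehat G(\xi,t)$ on $\Yman$; the paper builds this separately.

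Two genuine points to repair. First, your composition order is wrong: with $c\in\StabIdf$ and $\widetilde G=\widehat G\circ c^{-1}$ you get $\func\circ\widetilde G=(\func\circ\widehat G)\circ c^{-1}=G\circ c^{-1}$, not $G$. The correct formula is $\widetilde G=c^{-1}\circ\widehat G$ (as in the paper's $\ReqLift=\hat\InitLift^{-1}\circ\InitLift$), so that $\func\circ\widetilde G=\func\circ\widehat G=G$; your hedge ``or a similar pre/post-composition'' is exactly where this matters. Second, you have not addressed the case where a component $\Yman$ of $\Xman^2$ is \emph{non-orientable}. Then \AxVF\ for $\func|_{\Yman}$ produces a skew-gradient field only on the orientable double cover $\tYman$, and one must lift $\widehat G$ to $\tMman$, build the shift function there, and then skew-symmetrize it with respect to the covering involution $\Invol$ (the paper's Case~(B) and Lemma~\ref{lm:properties_xi-maps}) to ensure the correction descends to $\Mman$. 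Without this step the argument is incomplete for non-orientable $\Xman^2$.
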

\begin{proof}
Let $\Sman$ be a finite path-connected CW-complex and $s_0\in \Sman$ be a point.
Let also $\InitHom:\Sman\times I\to \OrbfX$ be a homotopy such that $\InitHom(s_0,0)=\func$, and the restriction $\InitHom|_{\Sman\times0}:\Sman\times 0\to \OrbfX$ lifts to a map $\ReqLift_0:\Sman\to\DiffMX$ satisfying $\ReqLift_0(s_0)=\id_{\Mman}$ and $\InitHom(s,0)=p(\ReqLift_0(s)) = \func\circ\ReqLift_0(s)$.
In particular, $\ReqLift_0(s)|_{\Xman}=\id_{\Xman}$ for all $s\in \Sman$.
 
We will prove that $\ReqLift_0$ extends to a map $\ReqLift:\Sman\times I\to \DiffMX$ such that $\InitHom=p\circ\ReqLift$.

Since $p:\DiffM\to\Orbf$ is a Serre fibration, it follows that $\ReqLift_0$ extends to a map $\InitLift:\Sman\times I\to\DiffM$ such that $\InitHom=p\circ\InitLift$.
So we have the following commutative diagram:
\[
 \xymatrix{
\Sman\times 0 \ar@{^{(}->}[d] \ar[r]^{~\ReqLift_0\quad} & \DiffMX  \ar@{^{(}->}[r] & \DiffM \ar[d]^{p} \\
\Sman\times I \ar[r]_{~\InitHom\quad} \ar[rru]^{\InitLift} \ar@{-->}[ru]^{\ReqLift} & \OrbfX \ar@{^{(}->}[r] & \Orbf \\
}
\]

Notice also that $\InitLift$ induces a continuous map
\[
 \DefInitLift:\Sman\times I \times \Mman\to\Mman, 
\qquad 
 \DefInitLift(s,t,x) =\InitLift(s,t)(x).
\]

\begin{lemma}\label{lm:X_is_invar}
Let $(s,t)\in \Sman\times I$ and $\gamma$ be a leaf of $\partitf$ contained in $\Xman$.
Then $\InitLift(s,t)(\gamma)=\gamma$.
If $\gamma$ is $1$-dimensional, then $\InitLift(s,t)$ also preserves orientation of $\gamma$.
\end{lemma}
\begin{proof}
By definition $\InitHom(s,t) = \func\circ\InitLift(s,t)$.
On the other hand, as $\InitHom(s,t)\in \OrbfX$, there exists $\XInitLift\in\DiffMX$ which depends on $(s,t)$ and such that $\InitHom(s,t)=\func\circ\XInitLift$ as well.
Hence $ \func\circ\InitLift(s,t) \circ \XInitLift^{-1} = \func$, and so
\begin{align*}
\InitLift(s,t) \circ \XInitLift^{-1}(\singf)  &=  \singf, &
\InitLift(s,t) \circ \XInitLift^{-1}(\func^{-1}(c))  &=  \func^{-1}(c)
\end{align*}
for all $c\in\Pman$.
Moreover, as $\XInitLift$ is fixed on $\Xman$, we obtain that
\begin{align*}
\InitLift(s,t) \bigl(\singf\cap\Xman\bigr)  &\subset\singf, &
\InitLift(s,t) \bigl(\func^{-1}(c)\cap\Xman\bigr)  &\subset  \func^{-1}(c),
\end{align*}
for all $(s,t)\in\Sman\times I$, that is
\begin{align*}
\DefInitLift\Bigl(\Sman\times I \times [\singf\cap\Xman\bigr] \Bigr)  &\subset \singf, & \ \ \
\DefInitLift\Bigl(\Sman\times I \times [\func^{-1}(c)\cap\Xman] \Bigr) &\subset \func^{-1}(c).
\end{align*}

Now notice that there are three possibilities for $\gamma$:
\begin{itemize}
 \item[(i)]
$\gamma$ is a critical point of $\func$,
 \item[(ii)]
$\gamma$ is a regular component of some level set $\func^{-1}(c)$, $c\in \Pman$,
 \item[(iii)]
there is a critical component $\omega$ of some level set $\func^{-1}(c)$ such that $\gamma$ is a connected component of $\omega\setminus\singf$.
\end{itemize}

Let $z$ be a critical point of $\func$ and $\omega$ be a connected component of $\func^{-1}(c)\cap\Xman$.
Since $\InitLift(s_0,0)=\id_{\Mman}$ and the sets $\Sman\times I \times \{z\}$ and $\Sman\times I \times \omega$ are connected, we obtain that 
$\DefInitLift\bigl(\Sman\times I \times \{z\}\bigr)= \{z\}$,  and
$\DefInitLift\bigl(\Sman\times I \times \omega \bigr) = \omega$.
Hence $\DefInitLift\bigl(\Sman\times I \times \alpha \bigr) = \alpha$ for any connected component $\alpha$ of $\omega\setminus\singf$.
In other words,
\begin{align*}
\mathrm{(i)}~\InitLift(s,t)(z) &= z, &
\mathrm{(ii)}~\InitLift(s,t)(\omega) &= \omega, &
\mathrm{(iii)}~\InitLift(s,t)(\alpha) &= \alpha,
\end{align*}
and, moreover, $\InitLift(s,t)$ preserves orientation of $\omega$ and $\alpha$ because $\InitLift(s_0,0)$ does so.
This proves our lemma for all the cases (i)-(iii) of $\gamma$.
\end{proof}
The lemma says, in particular, that the lifting $\InitLift$ fixes $\Xman^0$.
We will find the lifting which also fixes $\Xman^1 \cup \Xman^2$.

Choose an $\func$-adopted neighbourhood of $\Xman^1\cup\Xman^2$.
Let also $\Yman$ be a connected component of $\Xman^1\cup \Xman^2$.
We will now distinguish three cases of $\Yman$:
\begin{enumerate}
 \item[\rm(A)]
$\Yman$ is an orientable surface, 
 \item[\rm(B)]
$\Yman$ is a non-orientable surface, 
 \item[\rm(C)]
$\Yman$ is a regular component of some level set of $\func$, so $\Yman$ is a circle.
\end{enumerate}

\begin{lemma}\label{lm:exists_sh_func}
Suppose $\Yman$ belongs to the cases {\rm(A)} or {\rm(C)}.
Let also $\FldA$, $\FlowA$, $\ShA$, and $\Gamma$ be the same as in Axiom~\AxVF.
Then there exists a continuous map $\hat\ShFInitLift:\Sman\times I \to \Gamma\subset \Ci{\Mman}{\RRR}$ having the following properties:
\begin{enumerate}
 \item[\rm(a)]
$\hat\ShFInitLift(s,0)=0$ of $\Mman$ for all $s\in \Sman$;

 \item[\rm(b)]
$\supp\bigl(\hat\ShFInitLift(s,t)\bigr) \subset \Nman_{\Yman}$ for each $(s,t)\in \Sman\times I$;

 \item[\rm(c)]
Define the map 
$\hat\InitLift = 
\ShA \circ \hat\ShFInitLift:\Sman\times I \xrightarrow{~\hat\ShFInitLift~} \Gamma \xrightarrow{~\ShA~} \StabIdf$,
so
\[
 \hat\InitLift(s,t)(x) = \FlowA\bigl(x,\hat\ShFInitLift(s,t)(x)\bigr).
\]
Then $\hat\InitLift(s,t)=\InitLift(s,t)$ on $\Yman$ for each $(s,t)\in \Sman\times I$.
\end{enumerate}
Hence the map $\ReqLift:\Sman\times I \to \DiffMX$ defined by
\[
 \ReqLift(s,t) = \hat\InitLift(s,t)^{-1} \circ \InitLift(s,t)
\]
is a required lifting of $\InitHom$.
\end{lemma}
\begin{proof}
{\bf Case (A).} Now $\Yman$ is an \emph{orientable} component of $\Xman^2$.
Then by \AxRestr\ the restriction of $\func$ to $\Yman$ satisfies axiom \AxVF.
Let
\[
 \Gamma_{\Yman} = \{ \afunc\in\Ci{\Yman}{\RRR} \mid \FldA(\afunc)>-1 \}
\]
and $\ShA_{\Yman}:\Gamma_{\Yman}\to\StabId(\func|_{\Yman})$, $\ShA_{\Yman}(\afunc)(x) =\FlowA(x,\afunc(x))$, be the corresponding covering map described in (2) of \AxVF.

Due to Lemma~\ref{lm:X_is_invar} $\func\circ\InitLift(s,t)(x) = \func(x)$ for $(s,t,x)\in \Sman\times I\times \Yman$.
In other words, the restriction $\InitLift(s,t)|_{\Yman}$ of $\InitLift(s,t)$ to $\Yman$ belongs to the stabilizer $\Stab(\func|_{\Yman})$ of the restriction $\func|_{\Yman}$ with respect to the right action of the group $\Diff(\Yman)$.
Moreover, since $\InitLift(s,0)=\id_{\Mman}$ and $\Sman\times I\times\Yman$ is connected, it follows that $\InitLift(s,t)|_{\Yman}\in\StabId(\func|_{\Yman})$.

Consider the restriction to $\Yman$ map:
\[
 \InitLift_{\Yman}:\Sman\times I \to \StabId(\func|_{\Yman}),
\qquad 
 \InitLift_{\Yman}(s,t)(x) = \InitLift(s,t)(x)
\]
for $(s,t,x)\in \Sman\times I\times \Mman$.
This map is continuous into $\Cinf$-topology of $\StabId(\func|_{\Yman})$.

Since $\ShA_{\Yman}|_{\Gamma_{\Yman}}$ is a covering map, and $\InitLift_{\Yman}(s,0)=\id_{\Yman}$ for all $s\in \Sman$, we can lift $\InitLift_{\Yman}|_{\Sman\times 0}$ to a map $\ShFInitLift:\Sman\times 0\to \Gamma_{\Yman}$ by the formula $\ShFInitLift(s,0)=0:\Yman\to\RRR$ for all $s\in\Sman$.
Then from covering homotopy property of $\ShA_{\Yman}|_{\Gamma_{\Yman}}$ we get that $\InitLift_{\Yman}$ extends to a lift $\ShFInitLift:\Sman\times I\to \Gamma_{\Yman}$ such that the following diagram is commutative:
\begin{equation}\label{equ:lift_to_Gamma}
 \xymatrix{
&  \Gamma_{\Yman} \ar[d]^{\ShA_{\Yman}}  \ar@{^{(}->}[r] & \Ci{\Yman}{\RRR} \\
\Sman\times I \ar[ru]^{\ShFInitLift} \ar[r]^{\InitLift_{\Yman}} & \StabId(\func|_{\Yman})  \ar@{^{(}->}[r]  &  \Diff(\Yman)
}
\end{equation}
In other words,
\begin{equation}\label{equ:InitLift_is_a_shift}
 \InitLift(s,t)(x)=\InitLift_{\Yman}(s,t)(x) = \FlowA\bigl(x,\ShFInitLift(s,t)(x)\bigr)
\end{equation}
for $x\in\Yman$.

Notice that for a neighbourhood $\Nman_{\Yman}$ there exists an \emph{linear} extension operator 
\[
 E:\Ci{\Yman}{\RRR} \longrightarrow \Ci{\Mman}{\RRR} 
\]
such that $E\afunc|_{\Yman} = \afunc$, and $\supp(E\afunc)\subset \Nman_{\Yman}$ for each $\afunc\in\Ci{\Yman}{\RRR}$, see~\cite{Seeley:PAMS:1964}.
Define the composition 
\[
 \ShFInitLift' = E \circ \ShFInitLift:\Sman\times I \longrightarrow\Ci{\Mman}{\RRR}
\]
and consider the following map 
\[
 \DefInitLift':\Sman\times I \times \Mman\to \Mman,
\qquad 
 \DefInitLift'(s,t,x) =  \FlowA\bigl(x,\ShFInitLift'(s,t)(x)\bigr).
\]
Evidently, $\DefInitLift'(s,t,x)=\DefInitLift(s,t,x)$ for $x\in\Yman$, so the restriction $\DefInitLift_{s,t}$ to $\Yman$ is a diffeomorphism, and therefore we have the following inequality, see~\eqref{equ:Shift_is_diff}: 
\begin{equation}\label{equ:Fhd_m1}
 \FldA(\ShFInitLift'(s,t))(x)> -1, \qquad x\in\Yman.
\end{equation}
As $\Sman\times I \times \Yman$ is compact and partial derivatives of $\ShFInitLift'(s,t)$ are continuous in $(s,t,x)$, it follows that~\eqref{equ:Fhd_m1} holds for all $x$ belonging to some neighbourhood of $\Yman$ which does not depend on $(s,t)$.
Decreasing $\Nman_{\Yman}$ we can assume that~\eqref{equ:Fhd_m1} holds on all of $\Nman_{\Yman}$.

Let $\Wman$ be a neighbourhood of $\Yman$ such that
\begin{equation}\label{equ:nbh_of_Y}
\Yman \subset \Wman \subset \overline{\Wman} \subset \Nman_{\Yman}.
\end{equation}

Take a $\Cinf$ function $\mu:\Mman\to[0,1]$ such that (i)~$\mu=1$ on $\Yman$; (ii)~$\mu=0$ on $\Mman\setminus\Wman$; (iii)~$\mu$ takes constant values on connected components of level sets of $\func$, so $\FldA(\mu)=0$.

Now define the map $\hat\ShFInitLift:\Sman\times I \to \Ci{\Mman}{\RRR}$ by
\[
 \hat\ShFInitLift(s,t)(x)= \mu(x)\ShFInitLift'(s,t)(x).
\]
Notice that 
\[
 \FldA\bigl(\hat\ShFInitLift(s,t)\bigr)=
\FldA\bigl(\mu\cdot\ShFInitLift'(s,t)\bigr) = 
 \mu \FldA(\ShFInitLift') +  \FldA(\mu)\ShFInitLift'=
 \mu \FldA(\ShFInitLift') > -1.
\] 
The latter inequality follows from~\eqref{equ:Fhd_m1} and the assumption that $0\leq\mu\leq 1$.
Hence $\hat\ShFInitLift(\Sman\times I)\subset\Gamma$.

We have to show that $\hat\ShFInitLift$ satisfies conditions (a)-(c) of lemma.

(a) Since $\ShFInitLift(s,0)=0$ on $\Yman$ and $E$ is a linear operator, it follows that $\ShFInitLift'(s,0)=0$ on $\Mman$, and therefore $\hat\ShFInitLift(s,0)=0$ on $\Mman$ as well.

(b) Since $\supp(\mu)\subset\Nman_{\Yman}$, we have that $\supp(\hat\ShFInitLift(s,t))\subset\Nman_{\Yman}$.

(c) Define the map $\hat\InitLift:\Sman\times I\to \Ci{\Mman}{\Mman}$ by 
\[
 \hat\InitLift(s,t)(x) = \FlowA\bigl(x,\hat\ShFInitLift(s,t)(x)\bigr).
\]
Then it follows from~\eqref{equ:InitLift_is_a_shift} that $\hat\InitLift(s,t)=\InitLift(s,t)$ on $\Yman$.
Therefore $\ReqLift(s,t) = \hat\InitLift(s,t)^{-1} \circ \InitLift(s,t)$ is fixed on $\Yman$.
Moreover, since $\hat\InitLift(s,t)$ preserves leaves of $\partitf$, we see that $\func\circ\hat\InitLift(s,t)^{-1}=\func$, and therefore
\[
 \func\circ\ReqLift(s,t)= \func\circ\hat\InitLift(s,t)^{-1} \circ \InitLift(s,t)=
\func\circ \InitLift(s,t) = \InitHom(s,t).
\]

\medskip

{\bf Case (C)}. Suppose $\Yman$ is a regular component of some level set of $\func$, so we can identify $\Nman_{\Yman}$ with the product $S^1\times J$, where $S^1=\{z\in\CCC \mid |z|=1\}$ is the unit circle in the complex plane, and $J=[-1,1]$ or $[0,1]$, so that $\Yman$ corresponds to $S^1\times 0$.
The proof in this case is similar to the proof of Lemma~\ref{lm:deform_near_X}.

Define a flow $\FlowA:(S^1\times J)\times \RRR\to S^1\times J$ by $\FlowA(z,\tau,\theta) = (ze^{2\pi i\theta},\tau)$.
Consider the universal covering map $q:\RRR\to S^1$, $q(\theta)=e^{2\pi i \theta}$.
Since $\InitLift(s,t)$ preserves $\Yman=S^1\times 0$, and $\InitLift(s,0)=\id_{\Yman}$ for all $s\in \Sman$, the map
\[
 \DefInitLift:\Sman\times I \times \Yman\longrightarrow \Yman=S^1
\]
lifts to a function
\[
 \DefShFInitLift:\Sman\times I \times \Yman\longrightarrow\RRR,
\]
such that $\DefInitLift=q\circ\DefShFInitLift$ and $\DefShFInitLift(s,0,z)=0$ for $(s,z)\in \Sman\times \Yman$.
In other words,
\[
 \DefInitLift(s,t,z)=e^{2\pi i \DefShFInitLift(s,t,z)}.
\]
Since $q$ is a local diffeomorphism, and $\InitLift$ is continuous into $\Cinf$-topology of $\Ci{\Mman}{\Mman}$, it follows that the function 
\[
 \ShFInitLift:\Sman\times I \longrightarrow \Ci{\Yman}{\RRR},
\qquad 
\ShFInitLift(s,t)(z) = \DefShFInitLift(s,t,z)
\]
is continuous into $\Cinf$-topology of $\Ci{\Yman}{\RRR}$ as well.

Now take a $\Cinf$ function $\mu:J\to[0,1]$ satisfying (i) $\mu(0)=1$, (ii) $\mu=0$ outside $[-0.5,0.5] \cap J$, and define the map 
\[\hat\ShFInitLift:\Sman\times I \longrightarrow\Ci{S^1\times J}{\RRR}=\Ci{\Nman_{\Yman}}{\RRR}\]
by
\[
 \hat\ShFInitLift(s,t)(z,\tau)= \mu(\tau)\ShFInitLift(s,t)(z).
\]
Evidently, $\supp(\hat\ShFInitLift(s,t)) \subset \Int{\Nman_{\Uman}}$.
Therefore we can extend $\hat\ShFInitLift(s,t)$ by zero on all of $\Mman$, and so regard $\hat\ShFInitLift$ as a map $\hat\ShFInitLift:\Sman\times I \to\Ci{\Mman}{\RRR}$.
Similarly to the case (A) one can verify that $\hat\ShFInitLift(\Sman\times I)\subset\Gamma$ and $\hat\ShFInitLift$ has properties (a)-(c).
\end{proof}

\medskip

{\bf Case (B)}.
Suppose $\Yman$ is a non-orientable connected component of $\Xman$.
Denote 
\[
\tXman=\DoubleCover^{-1}(\Xman), 
\qquad 
\tNman_{\tYman}=\DoubleCover^{-1}(\Nman_{\Yman}),
\qquad 
\tYman=\DoubleCover^{-1}(\Yman).
\]
Let also $\tDifftMtX$ be the subgroup of $\tDifftM$ consisting of diffeomorphisms fixed on $\tXman$ (and commuting with $\Invol$).
Since $\InitHom(s_0,0)=\id_{\Mman}$, it follows that $\InitLift$ lifts to a map $\widetilde{\InitLift}: \Sman \times I \to \tDifftM$.

Notice that $\tYman$ is a connected orientable surface and $\tfunc$ satisfies axioms \AxBd-\AxFibr.
Therefore we can apply case (A) of Lemma~\ref{lm:exists_sh_func} and find a map 
\[
\hat\ShFInitLift:\Sman\times I \longrightarrow \Gamma=\{\afunc\in\Ci{\tMman}{\RRR}
\mid \FldA(\afunc)>-1\}
\]
satisfying the conditions (a)-(c): $\hat\ShFInitLift(s,0)=0$, $\supp\bigl(\hat\ShFInitLift(s,t)\bigr)\subset\Nman_{\hat\Yman}$, and $\hat\InitLift=\ShA\circ\hat\ShFInitLift(s,t)$ coincides with $\widetilde\InitLift(s,t)$ on $\hat\Yman$.

Notice that the restriction $\widetilde\InitLift(s,t)$ to $\tYman$ is a lifting of $\InitLift(s,t)|_{\Yman}$, and therefore it commutes with $\Invol$, that is
\[
 \widetilde\InitLift(s,t)\circ\xi(x) = \xi\circ\widetilde\InitLift(s,t)(x).
\]
Moreover, $\widetilde\InitLift(s,t)|_{\Yman} \in \widetilde{\Stab}_{\id}(\tfunc|_{\tYman})$.
Then it follows from the construction of Lemma~\ref{lm:exists_sh_func} that 
\[
\hat\ShFInitLift(s,t)|_{\tYman}\in 
\widetilde\Gamma_{\tYman} = \{\afunc\in\Ci{\tYman}{\RRR} \mid \FldA(\afunc)>-1, \ \afunc\circ\Invol=-\afunc \}. \]
In particular we have that
\begin{equation}\label{equ:hatShFInitLift_xi_skew_sym_on_tY}
 \hat\ShFInitLift(s,t)\circ\Invol(x)= -\hat\ShFInitLift(s,t)(x), \qquad \forall x\in\tYman.
\end{equation}

Now define two maps 
\begin{align*}
&\hat\ShFInitLift_1:\Sman\times I \to \Ci{\tMman}{\RRR}, &&
 \hat\ShFInitLift_1(s,t)= \frac{1}{2}\bigl(\hat\ShFInitLift(s,t)- \hat\ShFInitLift(s,t)\circ\Invol \bigr),
\\
 &\hat\InitLift_1=\ShA\circ\hat\ShFInitLift_1:\Sman\times I\to \Ci{\Mman}{\Mman},
&&
\hat\InitLift_1(s,t)(x)=\FlowA\bigl(x,\hat\ShFInitLift_1(s,t)(x)\bigr).                                                                       \end{align*}
\begin{lemma}\label{lm:properties_xi-maps}
$\hat\ShFInitLift_1$ and $\hat\InitLift_1$ have the following properties 
\begin{align}
\label{equ:hatShFInitLift_1=hatShFInitLift_on_tY}
 \hat\ShFInitLift_1(s,t)(x) &= \hat\ShFInitLift_1(s,t)(x), \qquad x\in\tYman \\
\label{equ:hatShFInitLift_1_xi_skewsym}
 \hat\ShFInitLift_1(s,t)\circ\Invol&=-\hat\ShFInitLift_1(s,t),\\
\label{equ:hatInitLift_1_xi_sym}
 \hat\InitLift_1(s,t)\circ\Invol &= \Invol\circ\hat\InitLift_1(s,t), \\
\label{equ:ShFInitLift_SI_wG}
\hat\ShFInitLift_1(\Sman\times I) &\subset \widetilde\Gamma,
\end{align}
and satisfy conditions {\rm(a)-(c)} of Lemma~{\rm\ref{lm:exists_sh_func}}.
Hence the map 
\[
\widetilde\ReqLift:\Sman\times I \longrightarrow \DiffMX,
\qquad
 \widetilde\ReqLift(s,t) = \hat\InitLift(s,t)_1^{-1} \circ \widetilde\InitLift(s,t)
\]
is a $\Invol$-equivariant lifting of $\InitLift(s,t)$ inducing a map $\ReqLift:\Sman\times I \to \DiffMX$ being a required lifting of $\InitHom$.
\end{lemma}
\begin{proof}
\eqref{equ:hatShFInitLift_1_xi_skewsym} is evident, \eqref{equ:hatShFInitLift_1=hatShFInitLift_on_tY} follows from \eqref{equ:hatShFInitLift_xi_skew_sym_on_tY}.
Moreover,
\begin{align*}
 \hat\InitLift_1(s,t)\circ\Invol(x)
 &=\FlowA\bigl(\Invol(x),\hat\ShFInitLift_1(s,t)\circ\Invol(x)\bigr) 
 =\FlowA\bigl(\Invol(x),-\hat\ShFInitLift_1(s,t)(x)\bigr)\\
  &=\Invol\circ\FlowA\bigl(x,\hat\ShFInitLift_1(s,t)(x)\bigr)
 =\Invol\circ\hat\InitLift_1(s,t)(x).
\end{align*}
which proves~\eqref{equ:hatInitLift_1_xi_sym}.

Finally, since $\hat\ShFInitLift(s,t)\in\Gamma$ we have that $\FldA\bigl(\hat\ShFInitLift(s,t)\bigr)>-1$ on all of $\Mman$.
Moreover, due to~\eqref{equ:hatShFInitLift_1_xi_skewsym} and the assumption that $\Invol^{*}\FldA=-\FldA$, it follows that 
$
 \FldA\bigl(\hat\ShFInitLift(s,t)\circ\Invol\bigr) = 
-\FldA\bigl(\hat\ShFInitLift(s,t)\bigr),
$
whence
\begin{align*}
 \FldA(\hat\ShFInitLift_1(s,t))&=
\frac{1}{2}\FldA\bigl(\hat\ShFInitLift(s,t) - \hat\ShFInitLift(s,t)\circ\Invol\bigr) =
\frac{1}{2}\Bigl[ \FldA\bigl(\hat\ShFInitLift(s,t)\bigr) + \FldA\bigl(\hat\ShFInitLift(s,t)\bigr)\Bigr] \\ &= \FldA\bigl(\hat\ShFInitLift(s,t)\bigr)>-1.
\end{align*}

Property (a) is evident, (b) follows from the relation $\Invol(\tNman_{\tYman})=\tNman_{\tYman}$, and (c) from~\eqref{equ:hatShFInitLift_1=hatShFInitLift_on_tY} and property (c) for $\hat\InitLift$.
Lemma~\ref{lm:properties_xi-maps} is finished.
\end{proof}

Thus for any connected component $\Yman$ we can change $\InitLift(s,t)$ on $\Nman_{\Yman}$ to make it a lifting of $\InitHom(s,t)$ fixed on $\Yman$.
Since neighbourhoods $\Nman_{\Yman}$ are disjoint, we can make these changes mutually on all of $\Nman$, and so assume that $\InitHom(s,t)$ is fixed on $\Xman$.
Then $\InitHom$ will be the desired lifting $\ReqLift$ of $\InitHom$.
Theorem~\ref{th:p_DiffMX_to_OrbfX_is_Serre} is completed.
\end{proof}

\def\cprime{$'$}
\providecommand{\bysame}{\leavevmode\hbox to3em{\hrulefill}\thinspace}
\providecommand{\MR}{\relax\ifhmode\unskip\space\fi MR }
\providecommand{\MRhref}[2]{%
  \href{http://www.ams.org/mathscinet-getitem?mr=#1}{#2}
}
\providecommand{\href}[2]{#2}


\begin{thebibliography}{10}

\bibitem{Birman:2:CPAM:1969}
Joan~S. Birman, \emph{Mapping class groups and their relationship to braid
  groups}, Comm. Pure Appl. Math. \textbf{22} (1969), 213--238. \MR{0243519 (39
  \#4840)}

\bibitem{EarleEells:DG:1970}
C.~J. Earle and J.~Eells, \emph{A fibre bundle description of teichm\"uller
  theory}, J. Differential Geometry \textbf{3} (1969), 19--43. \MR{MR0276999
  (43 \#2737a)}

\bibitem{EarleSchatz:DG:1970}
C.~J. Earle and A.~Schatz, \emph{Teichm\"uller theory for surfaces with
  boundary}, J. Differential Geometry \textbf{4} (1970), 169--185.
  \MR{MR0277000 (43 \#2737b)}

\bibitem{Gramain:ASENS:1973}
Andr{\'e} Gramain, \emph{Le type d'homotopie du groupe des diff\'eomorphismes
  d'une surface compacte}, Ann. Sci. \'Ecole Norm. Sup. (4) \textbf{6} (1973),
  53--66. \MR{MR0326773 (48 \#5116)}

\bibitem{Maks:TA:2003}
Sergiy Maksymenko, \emph{Smooth shifts along trajectories of flows}, Topology
  Appl. \textbf{130} (2003), no.~2, 183--204. \MR{MR1973397 (2005d:37035)}

\bibitem{Maks:AGAG:2006}
\bysame, \emph{Homotopy types of stabilizers and orbits of {M}orse functions on
  surfaces}, Ann. Global Anal. Geom. \textbf{29} (2006), no.~3, 241--285.
  \MR{MR2248072 (2007k:57067)}

\bibitem{Maks:TrMath:2008}
\bysame, \emph{Homotopy dimension of orbits of {M}orse functions on surfaces},
  Travaux Math\'ematiques \textbf{18} (2008), 39--44.

\bibitem{Maks:MFAT:2010}
\bysame, \emph{Functions on surfaces and incompressible subsurfaces}, Methods
  Funct. Anal. Topology \textbf{16} (2010), no.~2, 167--182.

\bibitem{Maksymenko:ProcIM:ENG:2010}
\bysame, \emph{Functions with isolated singularities on surfaces}, Geometry and
  topology of functions on manifolds. Pr. Inst. Mat. Nats. Akad. Nauk Ukr. Mat.
  Zastos. \textbf{7} (2010), no.~4, 7--66.

\bibitem{Seeley:PAMS:1964}
R.~T. Seeley, \emph{Extension of {$C^{\infty }$} functions defined in a half
  space}, Proc. Amer. Math. Soc. \textbf{15} (1964), 625--626. \MR{MR0165392
  (29 \#2676)}

\bibitem{Sergeraert:ASENS:1972}
Francis Sergeraert, \emph{Un th\'eor\`eme de fonctions implicites sur certains
  espaces de {F}r\'echet et quelques applications}, Ann. Sci. \'Ecole Norm.
  Sup. (4) \textbf{5} (1972), 599--660. \MR{MR0418140 (54 \#6182)}

\bibitem{Smale:ProcAMS:1959}
Stephen Smale, \emph{Diffeomorphisms of the {$2$}-sphere}, Proc. Amer. Math.
  Soc. \textbf{10} (1959), 621--626. \MR{MR0112149 (22 \#3004)}

\end{thebibliography}
\end{document}